  \def\?[#1]{\textbf{[#1]}\marginpar{\Large{\textbf{??}}}}%
\numberwithin{equation}{section}
\def\arXiv#1{\href{http://arxiv.org/abs/#1}{arXiv:#1}}
\newtheorem{theo}{Theorem}
\newtheorem{prop}{Proposition}[section]
\newtheorem{lemm}[prop]{Lemma}
\def\Remark{\noindent\textbf{Remark.}\ }
\let\Im=\Imag 
\newcommand{\RR}{{\mathbb R}}
\newcommand{\CC}{{\mathbb C}}
\newcommand{\HH}{{\mathbb H}}
\title[Strichartz estimates]%
{Strichartz estimates for convex co-compact hyperbolic surfaces}
\author{Jian Wang}
\email{wangjian@berkeley.edu}
\address{Department of Mathematics, University of California, Berkeley, CA, 94720}
\begin{document}

\begin{abstract}
Using recent work of Bourgain--Dyatlov \cite{BD} we show that for any convex co-compact hyperbolic surface Strichartz estimates for the Schr\"{o}dinger equation hold with an arbitrarily small loss of regularity.
\end{abstract}

\maketitle

\section{introduction}
\label{s: intro}

In a recent paper \cite{BD}, Bourgain--Dyatlov showed that any convex co-compact hyperbolic surface enjoys a resonance free strip with corresponding polynomial bounds on the resolvent. As is well know (see Datchev \cite{Da}) such estimates imply local smoothing with logarithmic loss of regularity. Using the procedure going back to the work of Staffilani-Tataru \cite{ST} we show that this implies Strichartz estimates with an arbitrarily small loss of regularity.

In the case of quotients for which the limit set has dimension $ \delta $ satisfying $ \delta < \frac{ 1 }{ 2 } $, Burq--Guillarmou--Hassel \cite{BGH} showed that these estimates hold without any loss and we suspect that this might be the case in general. However, until \cite{BD} the only estimate valid for surfaces with $ \delta \geq \frac{ 1 }{ 2 }$ was the same as that for compact surfaces, as in the work of Burq--G\'{e}rard--Tzvetkov \cite{BGT}.

In this paper, we always suppose $ M = \Gamma \backslash \mathbb{ H } $ is a convex co-compact surface (for the definition see for instance \cite[Section 2.4]{Bo}). Then we have the following result:

\begin{theo}
 \label{thm: str}
Suppose $ M = \Gamma \backslash \HH $, and $ u_0 \in C_0^{ \infty } ( M )$, then for $ p , q \geq 2 $ satisfying $ ( p , q ) \neq ( 2 , \infty ) $ and $ \frac{ 1 }{ p } + \frac{ 1 }{ q } = \frac{ 1 }{ 2 } $
\begin{equation}
 \label{ineq: str}
\| e^{ -i t \Delta_M } u_0 \|_{ L^p ( [ 0 , 1 ] , L^q ( M ) ) } \leq C \| u_0 \|_{ H^{ \epsilon } ( M ) }.
\end{equation}
\end{theo}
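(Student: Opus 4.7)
\emph{Proof plan.} My plan is to follow the strategy of Staffilani--Tataru \cite{ST} and Burq--G\'erard--Tzvetkov \cite{BGT}: derive Strichartz from a short-time no-loss Strichartz estimate combined with a local smoothing estimate whose loss controls the global Strichartz loss. The key input from \cite{BD} is that, through Datchev's argument \cite{Da}, local smoothing on a convex co-compact hyperbolic surface holds with only \emph{logarithmic} loss. Concretely, the resonance-free strip $\{\Im\lambda > -\nu\}$ (for some $\nu > 0$) and the polynomial bounds on the cut-off resolvent $\chi(\Delta_M - \lambda^2)^{-1}\chi$ from \cite{BD} translate, via \cite{Da}, into
\begin{equation} \label{eq:locsm}
\bigl\| \chi\, e^{-it\Delta_M} u_0 \bigr\|_{L^2([0,1]; H^{1/2}(M))} \leq C_\epsilon \| u_0 \|_{H^\epsilon(M)}, \quad \chi \in C_c^\infty(M), \ \epsilon > 0;
\end{equation}
equivalently, at spectral scale $h^{-1}$ the local smoothing constant grows only as $|\log h|$ rather than as a positive power of $h^{-1}$.

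Next, I decompose $u_0 = \sum_h P_h u_0$ into dyadic frequency blocks and, on each block, construct a WKB / Fourier integral parametrix for $e^{-it\Delta_M}P_h$ that agrees, in local charts, with a Euclidean Schr\"odinger propagator on time intervals of length $\sim h$. The flat Strichartz inequality then yields the short-time no-loss estimate
\[
\|e^{-it\Delta_M} P_h u_0\|_{L^p([t_0, t_0+h]; L^q(M))} \lesssim \|P_h u_0\|_{L^2(M)},
\]
and summing over $\sim h^{-1}$ such intervals covering $[0,1]$ gives a first-pass bound with a crude loss of $h^{-1/p}$. Exploiting the group property to slide the initial slice inside each short interval and averaging against \eqref{eq:locsm} (the Staffilani--Tataru bootstrap) replaces this crude loss by the loss in the local smoothing estimate, which in our setting is logarithmic and is therefore absorbed by an arbitrary $h^{-\epsilon}$. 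A Littlewood--Paley square-function argument, with Christ--Kiselev to handle the inhomogeneous piece, reassembles the dyadic estimates into \eqref{ineq: str}.

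The principal technical difficulty is that \eqref{eq:locsm} is local, involving a compactly supported cut-off $\chi$, whereas \eqref{ineq: str} is global on $M$. Outside a sufficiently large compact set, however, $M$ is a disjoint union of hyperbolic funnels whose geodesic flow is non-trapping, and on such ends Strichartz without loss follows from standard non-trapping arguments in the spirit of Burq--Guillarmou--Hassell \cite{BGH}. Merging the trapped and non-trapped regimes via a partition of unity, with the resulting commutator terms absorbed into the local smoothing estimate, is the step that demands the most care and the place where the exact hyperbolic structure of the funnels enters.
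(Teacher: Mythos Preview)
Your proposal is correct and follows essentially the same route as the paper: split $M$ into the compact core and the funnel ends via a partition of unity; on the compact core use the Bourgain--Dyatlov resolvent bound to get local smoothing with logarithmic loss, combine this with the semiclassical short-time Strichartz (dispersion estimate + Keel--Tao on intervals of length $\sim h$) via the Staffilani--Tataru bootstrap to obtain a $|\log h|$-loss Strichartz for spectrally localized data, and sum via Littlewood--Paley; on the funnel ends use no-loss Strichartz from the non-trapping geometry and absorb the commutator $[\Delta,\chi]u$ with Duhamel, Christ--Kiselev, and the local smoothing estimate. The only cosmetic difference is that the paper gives a self-contained proof of the funnel Strichartz estimate by explicitly bounding the propagator kernel $\sum_{\gamma\in\Gamma}K_{\mathbb H}(t,z,\gamma z')$ rather than invoking \cite{BGH} or \cite{Bo2}.
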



We briefly outline the proof. Since $ M $ is a convex co-compact hyperbolic surface, it can be written as a union of a compact set and finitely many half-funnels (\cite[Section 2.4]{Bo}). In the compact part, there exist a (fractal) set of trapped geodesics. From Theorem 2 in \cite{BD} we know that we can bound the cut-off resolvent by $ h^{ -1 } | \log{ h } | $ (see inequality \eqref{eq: res} below), and this enables us to derive Strichartz estimates with only $ \epsilon $-loss of derivatives for arbitrary positive $ \epsilon $. There is no trapping in the half-funnels, hence we have Strichartz estimates for these parts (see \cite[Lemma 2.2]{BGH}). However, since we are only concerned with Strichartz estimates with $ \epsilon $-loss for the whole surface, we only use Strichartz estimates with an $ \epsilon $-loss for these half-funnels. These are obtained by a direct self-contained argument. We remark however that the results of Bouclet \cite{Bo2} give stronger estimates which could be used in case no-loss estimates are obtained in the compact part.

As an application of Theorem~\ref{thm: str} we obtain new local well-poseness results for nonlinear Schr\"{o}dinger equations. Specifically we have the following

\begin{theo}
 \label{thm: sch}
Consider the Schr\"{o}dinger equation
\begin{equation}
 i \partial_t u - \Delta u = F ( u ) ,~~~~ u ( 0 , \cdot ) = u_0 ,
\end{equation}
where $ F $ is a nonlinear polynomial of degree $ \beta $ satisfying $ F ( 0 ) = 0 $.
 For any $ s > 1 - \frac{ 2 } { \max{ \{ \beta - 1 , 2 \} } } $, there exists $ p > \beta - 1 $ such that for any $ u_0 \in H^{ s }( M ) $, there exists $ T > 0 $ and a unique solution
$$u \in C ( [ -T , T ] ; H^{ s } ( M ) ) \cap L^p ( [ -T , T ] ; L^{ \infty } ( M ) ). $$
Moreover,

(1) If $ \| u_0 \|_{ H^s ( M ) } $ is bounded, then $ T $ can be bounded from below by a positive constant.

(2) If $ u_0 \in H^r $ for some $ r > s $, then $ u \in C ( [ -T , T ] , H^r ( M ) ) $.
\end{theo}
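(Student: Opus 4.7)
The plan is to run a standard Banach fixed point argument on the Duhamel formulation, in a resolution space adapted to the Strichartz pair produced by Theorem~\ref{thm: str}. Given $s > 1 - 2/\max\{\beta-1, 2\}$, first choose $p > \max\{\beta-1, 2\}$ so close to this maximum that, for a small $\epsilon > 0$ to be fixed, one still has $s > 1 - 2/p + 2\epsilon$. This is possible precisely by the assumption on $s$. Let $q$ be determined by $1/p + 1/q = 1/2$, so $(p,q)$ is Schr\"odinger-admissible in dimension two and falls under Theorem~\ref{thm: str}. Work in
$$ X_T := C([-T,T]; H^s(M)) \cap L^p([-T,T]; L^\infty(M)) $$
with the natural norm, and study the map
$$ \Phi(u)(t) = e^{-it\Delta_M} u_0 - i\int_0^t e^{-i(t-\tau)\Delta_M} F(u(\tau))\,d\tau. $$

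For the linear term, $e^{-it\Delta_M}$ is unitary on $H^s$, controlling the $C_t H^s$ component. The $L^p_t L^\infty_x$ component comes from combining Theorem~\ref{thm: str} with the two-dimensional Sobolev embedding $W^{\sigma, q}(M) \hookrightarrow L^\infty(M)$, valid for $\sigma > 2/q = 1 - 2/p$:
$$ \|e^{-it\Delta_M} u_0\|_{L^p_t L^\infty_x} \lesssim \|e^{-it\Delta_M} u_0\|_{L^p_t W^{1-2/p + \epsilon, q}_x} \lesssim \|u_0\|_{H^{1-2/p + 2\epsilon}} \lesssim \|u_0\|_{H^s}. $$
For the inhomogeneous term, Minkowski in the $\tau$-integral together with the same Strichartz-plus-Sobolev bound applied slice-by-slice gives
$$ \Big\|\int_0^t e^{-i(t-\tau)\Delta_M} g(\tau)\,d\tau \Big\|_{X_T} \lesssim \int_{-T}^T \|g(\tau)\|_{H^s}\,d\tau. $$

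Taking $g = F(u)$ and using a Moser-type composition bound for the polynomial $F$ with $F(0) = 0$,
$$ \|F(u)\|_{H^s} \lesssim \|u\|_{L^\infty}^{\beta-1}\|u\|_{H^s}, $$
followed by H\"older in time (possible since $p > \beta - 1$), I obtain
$$ \int_{-T}^T \|F(u)\|_{H^s}\,d\tau \lesssim (2T)^{1 - (\beta-1)/p}\|u\|_{L^p_t L^\infty}^{\beta-1}\|u\|_{L^\infty_t H^s} \leq C\,T^{1-(\beta-1)/p}\|u\|_{X_T}^\beta. $$
The same reasoning applied to $F(u) - F(v)$ via the pointwise bound $|F(u)-F(v)| \lesssim (|u|^{\beta-1} + |v|^{\beta-1})|u-v|$ yields a Lipschitz constant of order $T^{1-(\beta-1)/p}R^{\beta-1}$ on the ball of radius $R \sim \|u_0\|_{H^s}$ in $X_T$ centered at $e^{-it\Delta_M}u_0$. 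Choosing $T$ small depending only on $\|u_0\|_{H^s}$ gives a contraction, hence a unique fixed point; this proves local existence together with item (1). Item (2) follows by rerunning the same inhomogeneous estimate with $s$ replaced by $r$: because the $L^p_t L^\infty_x$ norm of $u$ is already controlled from the $H^s$-solution, a short Gronwall (or a second contraction at level $r$) propagates the $H^r$ regularity on the existing interval $[-T,T]$.

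The main technical obstacle is the $\epsilon$-loss built into Theorem~\ref{thm: str}: it forces the strict inequality $s > 1 - 2/\max\{\beta-1,2\}$ rather than the sharp threshold, and the choices of $p$ and $\epsilon$ must be made carefully so that the Sobolev embedding absorbs the loss. A secondary point that requires attention is the Moser estimate when $s < 1$ (below the two-dimensional algebra threshold); this is handled by a standard paraproduct/Kato--Ponce decomposition, exploiting the fact that $F$ is polynomial. Finally, because $M$ is non-compact, one should invoke density and the fact that the Strichartz estimate of Theorem~\ref{thm: str} extends from $C_c^\infty(M)$ to $H^s(M)$ by completion, which is straightforward.
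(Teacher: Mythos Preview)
Your proposal is correct and follows essentially the same route as the paper. The paper's own proof is a two-line reduction to \cite[Proposition~3.1]{BGT}: it selects $p>\max\{\beta-1,2\}$ with $s>2/q$, then $\epsilon>0$ so that $\sigma:=s-\epsilon>2/q$, and observes that with this $\sigma$ the contraction-mapping argument of \cite{BGT} goes through verbatim once their Strichartz input is replaced by Theorem~\ref{thm: str}. Your write-up simply unpacks that BGT argument in full (choice of the admissible pair, the space $X_T$, the Strichartz-plus-Sobolev bound $L^p_tW^{\sigma,q}_x\hookrightarrow L^p_tL^\infty_x$, the Kato--Ponce product estimate, and the contraction), with the same parameter bookkeeping for the $\epsilon$-loss.
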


Burq--G\'{e}rard--Tzvetkov proved a similar result where the same conclusions hold for $ s > 1 - \frac{ 1 }{ \max { \{ \beta - 1 , 2 \} } } $ (see \cite[Proposition 3.1]{BGT}). Thanks to Theorem~\ref{thm: str}, regularity can be lowered. In particular, for cubic non-linearities ($ \beta = 3 $) we have well-poseness in $ H^{ \epsilon } $ for any $ \epsilon > 0 $.
\begin{proof}[Proof of Theorem~\ref{thm: sch}.]
We indicate modifications needed in the proof of \cite[Proposition 3.1]{BGT}. Since $ s > 1 - \frac{ 2 }{ \max{ \{ \beta - 1 , 2 \} } } $, we can find $ p > \max{ \{ \beta - 1 , 2 \} } $ such that $ s > 1 - \frac{ 2 }{ p } = \frac{ 2 }{ q } $. Now we choose $ \epsilon > 0 $ satisfying $ s > \frac{ 2 }{ q } + \epsilon $. Let $ \sigma = s - \epsilon > \frac{ 2 }{ q }$. We can define the space $ Y_T $ in the proof of \cite[Proposition 3.1]{BGT} with this new $ \sigma $. Now the rest part of the proof of \cite[Propostion 3.1]{BGT} can be applied without change.
\end{proof}
This paper is organized in the following way: in Section \ref{s: cpt}, we prove Strichartz estimates for the compact region and in Section \ref{s: fun} we deal with estimates in the funnel. A combination of the two gives the estimate for the entire surface.

\medskip\noindent\textbf{Acknowledgements.}
I would like to thank Maciej Zworski for suggesting this problem and for helpful discussions and Jin Long for comments on the first version of this note. Partial support by the National Science Foundation grant DMS-1500852 is also gratefully acknowledged.

\section{Strichartz estimates for the compact region}
 \label{s: cpt}

We recall from \cite[Section 2.4]{Bo} that a convex co-compact surface $ M $ can be decomposed as $ M = M_0 \cup G_1 \cdots G_N $ where $ \partial M_0 = \bigsqcup_{ j = 1 }^N G_j$ and $ G_j \simeq [ 0, \infty )_r \times ( \RR / k_j \RR )_x $ with the metric $ g|_{ G_j } = d r^2 + \cosh^2 {r} d x^2 $. We refer to $ M_0 $ as the compact part and to $ G_j $ as half funnels. The full funnels are given by $ F_j = \langle z \mapsto k_j z \rangle \backslash \HH $

In this section we prove the Strichartz estimates for the compact region:

\begin{prop}(Strichartz estimates for the compact region).
 \label{p: str}
Suppose $ M = \Gamma \backslash \mathbb{ H } $, $ \chi \in C_0^{ \infty }( M ) $, and $ u_0 \in C_0^{ \infty }( M ) $. Then for all $ \epsilon > 0 $, and $ p , q \geq 2 $ satisfying $ ( p , q ) \neq ( 2 , \infty ) $ and $ \frac{ 1 }{ p } + \frac{ 1 }{ q } = \frac{ 1 }{ 2 } $, we have
\begin{equation}
 \label{ineq: strcpt}
\| \chi e^{ -i t \Delta_M } u_0 \|_{ L^p ( [ 0 , 1 ] , L^q( M ) ) } \leq C \| u_0 \|_{ H^{ \epsilon }( M ) }.
\end{equation}
for some constant $ C > 0 $.
\end{prop}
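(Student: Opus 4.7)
The approach is the standard Staffilani--Tataru pipeline \cite{ST}, whose inputs here are (i) the Bourgain--Dyatlov cut-off resolvent estimate and (ii) a short-time semiclassical Strichartz estimate of the type used in \cite{BGT}. By a Littlewood--Paley decomposition together with Minkowski's inequality, I would first reduce to proving, for each dyadic $h \in (0, 1]$ and each $v_0 = \psi(-h^2\Delta_M) u_0$ with $\psi \in C_0^\infty((0,\infty))$, the frequency-localized bound
\begin{equation*}
\|\chi e^{-it\Delta_M} v_0\|_{L^p([0,1], L^q(M))} \leq C\, h^{-\epsilon}\, \|v_0\|_{L^2(M)}
\end{equation*}
with constant independent of $h$.

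The first key ingredient is local smoothing with logarithmic loss. Starting from the Bourgain--Dyatlov cutoff resolvent bound $\|\chi R(\lambda)\chi\|_{L^2\to L^2} \lesssim h^{-1}|\log h|$ valid for $\lambda = h^{-1}$ in a strip $\Im\lambda > -c$ (stated in the introduction as inequality \eqref{eq: res}), Datchev's argument \cite{Da} gives
\begin{equation*}
\|\chi e^{-it\Delta_M} v_0\|_{L^2([0,1], L^2(M))}^2 \leq C\, h\, |\log h|\, \|v_0\|_{L^2(M)}^2,
\end{equation*}
i.e.\ a gain of one full derivative in the semiclassical scale, modulo a $|\log h|^{1/2}$ factor. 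The second ingredient, provided by the standard semiclassical parametrix construction of \cite{BGT}, is a no-loss Strichartz estimate on time intervals $I$ of length $|I| \sim h$:
\begin{equation*}
\|e^{-it\Delta_M} v_0\|_{L^p(I, L^q(M))} \leq C\, \|v_0\|_{L^2(M)}.
\end{equation*}

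The final step glues the short-time estimates together using local smoothing. Let $\widetilde\chi \in C_0^\infty(M)$ with $\widetilde\chi \equiv 1$ on $\supp\chi$ and set $w(t) = \widetilde\chi\, e^{-it\Delta_M} v_0$. Then $w$ solves $(i\partial_t - \Delta_M) w = [\Delta_M, \widetilde\chi]\, e^{-it\Delta_M} v_0 =: F(t)$ with $w(0) = \widetilde\chi v_0$, so by Duhamel
\begin{equation*}
w(t) = e^{-it\Delta_M}(\widetilde\chi v_0) + i\int_0^t e^{-i(t-s)\Delta_M} F(s)\, ds.
\end{equation*}
Partitioning $[0,1]$ into $h^{-1}$ subintervals of length $h$, applying the short-time Strichartz estimate on each interval, and reducing the Duhamel integral via the Christ--Kiselev lemma to a $TT^*$-type estimate controlled by the local smoothing bound applied to $F$ (which is supported where $\nabla\widetilde\chi \neq 0$ and contains one derivative of $u$ absorbed precisely by the derivative gained in local smoothing), one obtains $\|\chi e^{-it\Delta_M} v_0\|_{L^p_t L^q_x} \lesssim |\log h|^{1/2}\, \|v_0\|_{L^2}$. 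Dyadic summation over $h$ then swallows the logarithmic factor into the $h^{-\epsilon}$ loss.

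The main obstacle is the bookkeeping of losses: a trivial summation of short-time Strichartz over $h^{-1}$ intervals costs a factor $h^{-1/p}$, which is far worse than what is claimed. To replace this polynomial loss by a logarithmic one, one has to arrange the Duhamel iteration so that the derivative gained in the local smoothing estimate is used to cancel the derivative produced by the commutator $[\Delta_M, \widetilde\chi]$; this is exactly the point where the logarithmic, rather than merely polynomial, character of the Bourgain--Dyatlov resolvent bound is needed.
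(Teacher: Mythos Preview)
Your outline matches the paper's proof closely: the same Littlewood--Paley reduction to spectrally localized data, local smoothing from the Bourgain--Dyatlov resolvent bound, short-time semiclassical Strichartz on intervals of length $\sim h$, and a Duhamel/Christ--Kiselev gluing. Two points deserve attention.

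First, as you have written it, the Duhamel formula starts at $t=0$ and therefore carries a homogeneous piece $e^{-it\Delta_M}(\widetilde\chi v_0)$. Summing the short-time Strichartz bound for this piece over $h^{-1}$ intervals costs exactly the $h^{-1/p}$ you flag, and your sketch only explains how local smoothing removes this loss for the \emph{inhomogeneous} term. The paper avoids the issue by applying the time partition directly to $\chi u$: with $\psi$ a smooth partition of unity on $\RR$, one sets $u_j := \psi(t/h - j)\,\chi u$, so each $u_j$ vanishes at the left endpoint of its support and the Duhamel representation is purely inhomogeneous, $u_j(t) = -i\int_{-\infty}^t e^{-i(t-s)\Delta_M} w_j(s)\,ds$ with $w_j = (i\partial_t - \Delta_M)u_j$. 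The dangerous term in $w_j$ is $h^{-1}\psi'(t/h-j)\chi u$, and $\sum_j \|w_j\|_{L^2_tL^2_x}^2 \leq h^{-2}\|\widetilde\chi u\|_{L^2_tL^2_x}^2 \leq C h^{-1}|\log h|\,\|u_0\|_{L^2}^2$ by local smoothing; from there your $TT^*$/Christ--Kiselev step goes through. You should either adopt this time-localization or explain how to control $\sum_j \|\widetilde\chi u(jh)\|_{L^2}^2$ via local smoothing, which is an extra (though routine) step.

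Second, a minor correction: the loss one actually obtains is $|\log h|$, not $|\log h|^{1/2}$, because local smoothing is used twice---once to bound the forcing $w_j$ and once, in dual form, to convert $\|w_j\|_{L^2_tL^2_x}$ back into a Strichartz bound for $u_j$. Either power is absorbed by $h^{-\epsilon}$ upon dyadic summation, so this does not affect the final statement.
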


We first state Strichartz estimates with logarithmic loss for spectrally localized data:

\begin{lemm}(Strichartz estimates for spectrally localized data).
 \label{l: specloc}
Suppose $ M = \Gamma \backslash \mathbb{ H } $, $ \varphi \in C_0^{ \infty }( ( \frac{ 1 }{ 2 } , 2 ) ,\mathbb{ R } ) $ and $ \chi \in C_0^{ \infty } ( M ) $. Then for any $ u_0 \in C_0^{ \infty } ( M ) $ we have
\begin{equation}
 \label{ineq: strlog}
\| \chi e^{ -i t \Delta_M } \varphi ( h^2 \Delta_M ) u_0 \|_{ L^p ( [ 0 , 1 ] , L^q ( M ) ) } \leq C | \log{ h } | \| u_0 \|_{ L^2( M ) }.
\end{equation}
\end{lemm}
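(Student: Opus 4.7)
My plan is to implement the Staffilani--Tataru scheme \cite{ST} on the semiclassical time scale $h$: combine a short-time Strichartz estimate on intervals of length $h$ with a local smoothing estimate coming from the Bourgain--Dyatlov resolvent bound. Throughout write $u_h(t) := e^{-it\Delta_M}\varphi(h^2\Delta_M)u_0$, which is spectrally localized at frequencies of size $h^{-1}$, and fix $\tilde\chi \in C_0^\infty(M)$ with $\tilde\chi \equiv 1$ on a neighborhood of $\supp\chi$. The first step is to convert Theorem~2 of \cite{BD}---the cutoff resolvent bound $\|\chi R(\lambda \pm i0)\chi\|_{L^2 \to L^2} \leq C|\log\lambda|/\lambda$ for $\lambda \geq 1$---into local smoothing with logarithmic loss via the standard Kato/$TT^*$ mechanism of Datchev \cite{Da}, giving
$$\int_0^1 \|\tilde\chi u_h(t)\|_{L^2(M)}^2\, dt \leq Ch|\log h|\,\|u_0\|_{L^2(M)}^2.$$
The extra factor $h$ compared with the usual half-derivative gain is just the spectral localization cashed in.

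Next I would partition $[0,1]$ into $N \sim h^{-1}$ intervals $I_k = [kh, (k+1)h]$ and prove, on each $I_k$, the localized short-time estimate
$$\|\chi u_h\|_{L^p(I_k, L^q(M))} \leq C \|\tilde\chi u_h(kh)\|_{L^2(M)} + O(h^\infty)\|u_0\|_{L^2(M)}.$$
This would be obtained from a semiclassical WKB parametrix for $e^{-is\Delta_M}\varphi(h^2\Delta_M)$ valid for $|s| \leq h$: after rescaling $s = h\tau$ the parametrix is an FIO on a unit time interval whose underlying Hamiltonian flow moves an $O(1)$ distance, so Euclidean-type Strichartz applies (cf.\ \cite[Section~2.2]{BGT}). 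The crucial \emph{locality} of the right-hand side---that only $\tilde\chi u_h(kh)$ appears rather than the full $L^2$ norm---comes from an Egorov/propagation-of-singularities argument that lets one absorb a compactly supported $\tilde\chi$ on the right of $\chi e^{-i(t-kh)\Delta_M}\varphi(h^2\Delta_M)$ up to an $O(h^\infty)$ remainder, for $|t-kh|\le h$.

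The final step is to sum over $k$. The same Egorov argument read in reverse gives, for every $t\in I_k$, $\|\tilde\chi u_h(kh)\|_{L^2}\le \|\tilde\chi' u_h(t)\|_{L^2}+O(h^\infty)\|u_0\|_{L^2}$ with a slightly fatter $\tilde\chi'$. Averaging over $t\in I_k$, summing in $k$, and invoking the local smoothing bound yields $\sum_k \|\tilde\chi u_h(kh)\|_{L^2}^2 \le C|\log h|\,\|u_0\|_{L^2}^2$. Since each summand is trivially bounded by $\|u_0\|_{L^2}$, H\"older in $k$ gives $\sum_k \|\tilde\chi u_h(kh)\|_{L^2}^p \le C|\log h|\,\|u_0\|_{L^2}^p$, and combining this with the $p$-th power of the short-time estimate produces \eqref{ineq: strlog}.

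The main obstacle is the localized short-time Strichartz. A direct application of the compact-manifold estimate of Burq--G\'erard--Tzvetkov \cite{BGT} would put the global norm $\|u_0\|_{L^2}$ on the right, so that summing $N\sim h^{-1}$ copies would cost $h^{-1/p}$ rather than $|\log h|$. The entire point is to squeeze out a local $L^2$ on the right via the semiclassical parametrix and microlocal propagation, so that the loss can be absorbed by the local smoothing bound. Once this ingredient is in place, the H\"older summation and the extraction of local smoothing from the resolvent estimate are both routine.
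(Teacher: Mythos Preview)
Your plan is correct and shares the paper's overall Staffilani--Tataru architecture (short-time Strichartz on intervals of length $h$ glued together via local smoothing), but the gluing mechanism is genuinely different. The paper, following \cite{BGH}, introduces a smooth time partition $u_j=\psi(t/h-j)\chi u$, computes $(i\partial_t-\Delta)u_j=w_j$, and bounds each $\|u_j\|_{L^p;L^q}$ by Duhamel together with the \emph{dual} of local smoothing and the Christ--Kiselev lemma; local smoothing is then used a second time to control $\sum_j\|w_j\|_{L^2;L^2}^2$, and the two applications combine to produce the factor $|\log h|$. You instead use a finite-propagation/Egorov step to insert $\tilde\chi$ on the right of the short-time estimate, reducing directly to snapshots $\|\tilde\chi u_h(kh)\|_{L^2}$; local smoothing is invoked only once (after the averaging trick) to bound their $\ell^2$ sum, and your $\ell^\infty\cap\ell^2\hookrightarrow\ell^p$ interpolation then yields $|\log h|^{1/p}$, which is in fact slightly sharper than \eqref{ineq: strlog}. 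Your route avoids both Duhamel and Christ--Kiselev at the cost of making explicit the microlocal statement $\chi e^{-is\Delta}\varphi(h^2\Delta)(1-\tilde\chi)=O_{L^2\to H^N}(h^\infty)$ for $|s|\le\alpha h$; this is indeed a consequence of the WKB parametrix of \cite{BGT}, provided $\tilde\chi\equiv1$ on a sufficiently large neighbourhood of $\supp\chi$ (the geodesic flow moves an $O(1)$ distance on the rescaled unit interval). The paper's argument is perhaps more modular since it never needs this finite-propagation input, while yours is more direct and loses less.
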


Before proving the lemma we recall the following lemma due to Bouclet (see \cite[Corollary 1.6]{Bo1}).

\begin{lemm}
 \label{l: PL}
Suppose $ P $ is an elliptic self-adjoint differential operator of order $ m > 0 $ on $ M = \Gamma \backslash \mathbb{ H } $. If $ \varphi_0 \in C_0^{ \infty }( \mathbb{ R } ) $ and $ \varphi \in C_0^{ \infty }( \mathbb{ R } \setminus \{ 0 \} ) $ satisfy
\begin{equation}
 \label{eq: cutoff}
\varphi_0 ( \lambda ) + \sum_{ k = 1 }^{ \infty } \varphi ( 2^{ -mk } \lambda ) = 1
\end{equation}
for all $ \lambda \in \mathbb{ R } $. Then for $ 2 \leq q < \infty $ and $ f \in C^{ \infty }( M ) $ we have
\begin{equation}
 \label{eq: PL}
\| f \|_{ L^q ( M ) } \leq C ( \| \varphi_0 ( P ) f \|_{ L^q ( M ) } + ( \sum_{ k = 1 }^{ \infty } \| \varphi ( 2^{ -mk } P ) f \|_{ L^q ( M ) }^2 )^{ \frac{ 1 }{ 2 } } ).
\end{equation}
\end{lemm}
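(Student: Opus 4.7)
The plan is to combine a standard Rademacher--Khintchine randomisation argument with a Mikhlin--H\"ormander spectral multiplier theorem for $P$ on $M$. The inequality \eqref{eq: PL} follows from three standard pieces: the triangle inequality applied to the partition \eqref{eq: cutoff}, Minkowski's inequality
$$\Bigl\|\Bigl(\sum_k |g_k|^2\Bigr)^{1/2}\Bigr\|_{L^q(M)} \leq \Bigl(\sum_k \|g_k\|_{L^q(M)}^2\Bigr)^{1/2}, \qquad q \geq 2,$$
applied to $g_k = \varphi(2^{-mk}P)f$, and the stronger Littlewood--Paley square-function bound
$$\|f - \varphi_0(P)f\|_{L^q(M)} \leq C\, \Bigl\|\Bigl(\sum_{k \geq 1} |\varphi(2^{-mk}P)f|^2\Bigr)^{1/2}\Bigr\|_{L^q(M)}.$$
The real work is to prove the displayed Littlewood--Paley bound.

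First I would use Khintchine's inequality to convert the square-function norm to a randomised sum: for $2 \leq q < \infty$,
$$\Bigl\|\Bigl(\sum_k |g_k|^2\Bigr)^{1/2}\Bigr\|_{L^q(M)} \asymp \Bigl(\int_0^1 \Bigl\|\sum_k r_k(s) g_k\Bigr\|_{L^q(M)}^q \,ds\Bigr)^{1/q},$$
where the $r_k$ are the Rademacher functions. Applied to $g_k = \varphi(2^{-mk}P)f$, this reduces the problem to a uniform $L^q$ bound on the spectral multiplier $m_s(\lambda) := \sum_{k \geq 1} r_k(s)\,\varphi(2^{-mk}\lambda)$. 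Since only a bounded number of dyadic terms contribute at each $\lambda > 0$, the symbol $m_s$ satisfies the Mikhlin-type estimates $|\lambda^j \partial_\lambda^j m_s(\lambda)| \leq C_j$ with constants independent of $s \in [0,1]$.

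Second, I would invoke a Mikhlin--H\"ormander spectral multiplier theorem for $P$ on $M$ to conclude $\|m_s(P) f\|_{L^q(M)} \leq C\|f\|_{L^q(M)}$ uniformly in $s$. For an elliptic self-adjoint operator on a complete manifold with bounded geometry, such a theorem follows from Gaussian upper bounds on the heat kernel of $e^{-tP}$ together with the finite propagation speed of the associated wave group, as in the work of Alexopoulos, Duong--Ouhabaz--Sikora, or Hebisch.

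The main obstacle is this last geometric step: one must ensure the multiplier theorem applies uniformly on the non-compact convex co-compact surface $M$, whose funnels extend to infinity. Here one uses that $M$ has bounded geometry; the half-funnels $G_j$ are the explicit warped products $dr^2 + \cosh^2 r\, dx^2$ and embed isometrically into the full funnels $F_j = \langle z \mapsto k_j z \rangle \backslash \HH$, which are smooth complete hyperbolic surfaces where the required heat kernel estimates are known, while $M_0$ is handled by classical results on compact manifolds with smooth boundary. Once the multiplier theorem is in place, the Khintchine reduction, the Minkowski inequality, and the final triangle-inequality assembly are all routine.
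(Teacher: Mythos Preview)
The paper does not prove this lemma at all: it is quoted verbatim from Bouclet \cite[Corollary~1.6]{Bo1}. So the relevant comparison is between your sketch and Bouclet's actual proof.

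Your reduction---triangle inequality, Minkowski for $q\ge 2$, Khintchine randomisation, and the symbol bounds $|\lambda^j\partial_\lambda^j m_s(\lambda)|\le C_j$ for $m_s(\lambda)=\sum_k r_k(s)\varphi(2^{-mk}\lambda)$---is standard and fine. The difficulty is exactly where you say it is, the uniform $L^q$ bound for $m_s(P)$, but your proposed resolution does not quite close. The abstract Mikhlin--H\"ormander theorems you cite (Alexopoulos, Duong--Ouhabaz--Sikora, Hebisch) all rest on the volume doubling property of the underlying metric measure space; a convex co-compact hyperbolic surface has exponential volume growth in each funnel end and is \emph{not} doubling, so those results do not apply directly. ``Bounded geometry'' is not enough here: it gives local uniformity but says nothing about global volume growth, which is what controls the Calder\'on--Zygmund machinery behind those theorems. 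Spectral multiplier theorems valid on hyperbolic space do exist, but they typically demand holomorphic extension of the multiplier to a strip (Clerc--Stein/Anker type conditions), which a generic Rademacher sum $m_s$ certainly does not satisfy.

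Bouclet's proof in \cite{Bo1} avoids the doubling obstruction by a different mechanism: he builds a semiclassical parametrix for $\varphi(2^{-mk}P)$ that is, up to $O(h^\infty)$ remainders, a sum of properly supported pseudodifferential operators in coordinate charts adapted to the ends, with symbols in a good class. The $L^q$ boundedness then comes from weighted Calder\'on--Zygmund theory chart by chart, and the randomised square function is handled at the level of these local kernels rather than by invoking a global black-box multiplier theorem. If you want to make your outline work, you would essentially have to reproduce this localisation step; once you do, the Khintchine argument is indeed routine.
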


We will also use the following result:
\begin{lemm}
 \label{l: com}
If $ \chi \in C_0^{ \infty } ( M ) $ and $ \varphi \in C_0^{ \infty } ( \RR ) $ then for $ \chi_1 \in C_0^{ \infty } ( M ) $ satisfying $ \chi_1 = 1 $ on $ { \rm{ supp } } ( \chi ) $ and $ \varphi_1 \in C_0^{ \infty } $ satisfying $ \varphi_1 = 1 $ on ${ \rm{ supp } } ( \varphi ) $ we have
\begin{equation}
\label{eq: com}
[ \chi, \varphi ( - h^2 \Delta ) ] = h A \chi_1 \varphi_1 ( - h^2 \Delta ) + R ( h ),
\end{equation}
where for $ q \geq 2 $, $ A = O_{ L^{q} \to L^q } (1) $ and $ R ( h ) = O_{ H^{ -N } \to H^{ N } } ( h^{ \infty } )$.
\end{lemm}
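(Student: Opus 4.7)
The plan is to represent $\varphi(-h^2\Delta)$ via the Helffer--Sj\"ostrand functional calculus, expand the commutator, extract a factor of $h$ from $[\chi,-h^2\Delta]$, and then use the phase-space support properties of the resulting principal symbol together with the semiclassical symbolic calculus to insert $\chi_1\varphi_1(-h^2\Delta)$ on the right modulo errors smoothing of all orders.

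Writing $P=-h^2\Delta$ and choosing an almost analytic extension $\tilde\varphi\in C_c^\infty(\CC)$ of $\varphi$, I would start from the Helffer--Sj\"ostrand formula $\varphi(P)=-\tfrac{1}{\pi}\int_{\CC}\bar\partial\tilde\varphi(z)(z-P)^{-1}\,dm(z)$ and the resolvent identity $[\chi,(z-P)^{-1}]=(z-P)^{-1}[\chi,P](z-P)^{-1}$ to write
\[
[\chi,\varphi(P)] \;=\; -\tfrac{1}{\pi}\int_{\CC}\bar\partial\tilde\varphi(z)\,(z-P)^{-1}[\chi,P](z-P)^{-1}\,dm(z).
\]
A direct computation gives $[\chi,P]=h^2(\Delta\chi)+2h^2(\nabla\chi)\cdot\nabla = h\,B_h$, where $B_h=h(\Delta\chi)+2(\nabla\chi)\cdot(h\nabla)$ is a first-order semiclassical differential operator whose coefficients are supported in $\supp\nabla\chi\subset\{\chi_1=1\}$. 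Pulling out the factor $h$ and setting
\[
A \;:=\; -\tfrac{1}{\pi}\int_{\CC}\bar\partial\tilde\varphi(z)\,(z-P)^{-1}B_h(z-P)^{-1}\,dm(z),
\]
one obtains $[\chi,\varphi(P)]=hA$.

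Next, I would identify $A$ as a semiclassical pseudodifferential operator with principal symbol $\tfrac{1}{i}\{\chi,\varphi(|\xi|_g^2)\}=\tfrac{2}{i}(\nabla\chi\cdot\xi)\,\varphi'(|\xi|_g^2)$, compactly supported in $T^*M$ and lying in $\supp\nabla\chi\times\supp\varphi'$. Since $\supp\nabla\chi\subset\{\chi_1=1\}$ and $\supp\varphi'\subset\supp\varphi\subset\{\varphi_1=1\}$, the entire semiclassical full-symbol expansion of $A$ is microlocally supported where both $\chi_1=1$ and $\varphi_1(|\xi|_g^2)=1$. The semiclassical composition formula then gives $A=A\chi_1\varphi_1(P)+R_0(h)$ with $R_0(h)=O_{H^{-N}\to H^N}(h^\infty)$, and substituting into $[\chi,\varphi(P)]=hA$ produces the stated identity with $R(h)=h\,R_0(h)$.

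The main obstacle will be the uniform-in-$h$ continuity $A=O_{L^q\to L^q}(1)$ for each $q\geq 2$. For $q=2$ this follows from the Calder\'on--Vaillancourt theorem applied to the compactly supported symbol of $A$; for general $q$ one has to exploit the fact that the Schwartz kernel of $A$, read off from the Helffer--Sj\"ostrand integral, is concentrated within distance $O(h)$ of the diagonal modulo $O(h^\infty)$ smoothing terms, so Young's inequality in local coordinates gives the desired uniform $L^q\to L^q$ estimate. A minor additional technicality is the justification of the Helffer--Sj\"ostrand integral on a noncompact surface, which is handled using polynomial bounds on the hyperbolic resolvent away from the spectrum together with the $O(|\Im z|^N)$ decay of $\bar\partial\tilde\varphi$ for any $N$.
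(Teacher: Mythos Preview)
Your argument is correct and shares the paper's starting point (Helffer--Sj\"ostrand representation of $\varphi(-h^2\Delta)$), but the order of operations and the handling of noncompactness differ in a way worth noting.

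The paper does not extract the factor of $h$ from a single resolvent sandwich.  Instead it first proves a spatial localization $\chi\varphi(-h^2\Delta)=\chi\varphi(-h^2\Delta)\chi_1+O_{H^{-N}\to H^N}(h^\infty)$ by an iterated-commutator trick: it chooses a nested family $\chi_2,\dots,\chi_N$ with $\chi\chi_j=\chi$ and $\chi_1|_{\supp\chi_j}=1$, writes $\chi(z+h^2\Delta)^{-1}(1-\chi_1)=(-1)^{N-1}\chi\,\ad_{\chi_2}\cdots\ad_{\chi_N}(z+h^2\Delta)^{-1}(1-\chi_1)$, and uses $\ad_{\chi_j}(z+h^2\Delta)^{-1}=O(h|\Im z|^{-2})$ in semiclassical Sobolev norms to gain arbitrarily many powers of $h$.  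This purely operator-theoretic estimate reduces matters to a fixed compact surface $M_0\supset\supp\chi\cup\supp\chi_1$, \emph{after which} the class $\Psi_h^{-\infty}(M_0)$ is available; the frequency localization $[\chi,\varphi(P)](1-\varphi_1(P))=O(h^\infty)$ is then obtained by an entirely analogous iterated-$\ad_{\varphi_j(P)}$ argument.  The factor $h$ finally appears simply because $A:=h^{-1}[\chi,\varphi(P)]\in\Psi_h^{-\infty}(M_0)$, and the $L^q\to L^q$ bound is quoted from \cite[Lemma~2.2]{KTZ}.

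You instead pull the factor $h$ out immediately from $[\chi,P]=hB_h$, identify $A$ as a semiclassical $\Psi$DO by symbolic calculus, and use the composition formula to insert $\chi_1\varphi_1(P)$ on the right.  This is cleaner once the calculus is in hand, but the step you call a ``minor additional technicality''---making sense of $A$ as a semiclassical pseudodifferential operator on the noncompact $M$ so that the composition rules apply with $O(h^\infty)$ remainders---is exactly what the paper spends most of its proof on via the iterated-$\ad$ localization.  Your kernel-concentration sketch for the $L^q$ bound is a legitimate alternative to the citation of \cite{KTZ}; the paper's route has the advantage that the bound follows in one line once $A\in\Psi_h^{-\infty}(M_0)$ is established.
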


This Lemma can be deduced from results of Bouclet \cite{Bo3}, but we give a direct argument.

\begin{proof}[Proof of Lemma \ref{l: com}.]
First we show that
\begin{equation}
 \label{eq: loc}
\chi \varphi ( -h^2 \Delta ) = \chi \varphi ( -h^2 \Delta ) \chi_1 + O ( h^{ \infty } )_{ H^{ -N } \to H^{ N } }.
\end{equation}
In fact, by the Helffer-Sj\"ostrand formula (see for instance \cite[Theorem 14.9]{Zw}) we have
\begin{equation}
 \label{eq: HS}
\chi \varphi ( -h^2 \Delta ) ( 1 - \chi_1 ) = \frac{ 1 }{ \pi } \int_{ \CC } \bar{ \partial }_z \tilde { \varphi }( z ) \chi ( -h^2 \Delta - z )^{ -1 } ( 1 - \chi_1 ) dm,
\end{equation}
where $ \tilde \varphi \in C_0^{ \infty } ( \CC ) $ is an almost analytic extension of $ \varphi $.
Now we choose a sequence of cut-off functions $ \{ \chi_j \}_{ j = 2 }^{ \infty } $ such that $ \chi_{ j + 1 }|_{ { \rm { supp } }( \chi_j ) \cup { \rm{ supp } }( \chi )} = 1 $ and $ \chi_1|_{ { \rm{ supp } }( \chi_j ) } = 1 $ for $ j \geq 2 $.
Then for any $ N $
\begin{equation}\begin{split}
\label{eq: adchi}
\chi ( -h^{ 2 } \Delta - z )^{ -1 } ( 1 - \chi_1 ) 
= & \chi \chi_2 \cdots \chi_N ( -h^2 \Delta - z )^{ -1 } ( 1 - \chi_1 ) \\
= & ( -1 )^{ N - 1 } \chi { \rm{ ad } }_{ \chi_2 } \cdots { \rm{ ad } }_{ \chi_N }( -h^2 \Delta - z )^{ -1 } ( 1 - \chi_1 ).
\end{split}\end{equation}
We note that
\begin{equation*}\begin{split}
& { \rm{ ad } }_{ \chi_j }( -h^2 \Delta - z )^{ -1 } := [ ( -h^2 \Delta - z )^{ -1 } , \chi_j ] \\
= & ( -h^2 \Delta - z )^{ -1 }[ \chi_j , -h^2 \Delta - z ]( -h^2 \Delta - z )^{ -1 } 
= O_{ H_h^{ -l } \to H_h^{ -l + 3 } }( h | \Im{ z } |^{ - 2 } )
\end{split}\end{equation*}
for some $ K_l > 0 $.
Hence by iterating we know
\begin{equation}
{ \rm{ ad } }_{ \chi_2 } \cdots { \rm{ ad } }_{ \chi_N }( -h^2 \Delta - z )^{ -1 } = O_{ H_h ^{ -l } \to H_h^{ -l + N + 1 } }( h^{ N - 1 }| \Im{ z } |^{ - N } ),
\end{equation}
for some $ K_{ l , N } > 0 $.
Inserting this in \eqref{eq: adchi} and then \eqref{eq: HS} gives \eqref{eq: loc}.

Equation \eqref{eq: loc} allows us to define the symbol class as in \cite[Definition E1 - E3]{DZ} since now we can work on a compact surface without boundary $ M_0 $ containing $ { \rm{ supp } }( \chi ) \cup { \rm{ supp } }{ \chi_1 } $. In particular, we can use the space $ \Psi^{ -\infty }_{ h }( M_0 ) $.

Now we turn to proving \eqref{eq: com}. We first show that
\begin{equation}
[ \chi , \varphi ( -h^2 \Delta ) ]( 1 - \varphi_1 )( -h^2 \Delta ) = O_{ H^{ -N } \to H^{ N } }( h^{ \infty } ),
\end{equation}
that is,
\begin{equation}
 \label{eq: locphi}
\varphi ( -h^2 \Delta ) \chi ( 1 - \varphi_1 )( -h^2 \Delta ) = O_{ H^{ -N } \to H^{ N } }( h^{ \infty } ).
\end{equation}
We now define $ \varphi_j $ in a similar way to $ \chi_j $ in \eqref{eq: adchi}, then
\begin{equation*}\begin{split}
\varphi ( -h^2 \Delta ) \chi ( 1 - \varphi_1 )( -h^2 \Delta ) 
= & \varphi ( -h^2 \Delta ) \varphi_2 ( -h^2 \Delta ) \cdots \varphi_N ( -h^2 \Delta ) \chi ( 1 - \varphi ) ( -h^2 \Delta ) \\
= & ( -1 )^{ N - 1 } \varphi ( -h^2 \Delta ) { \rm { ad } }_{ \varphi_2 ( -h^2 \Delta ) } \cdots { \rm{ ad } }_{ \varphi_N ( -h^2 \Delta ) } \chi ( 1 - \varphi ) ( -h^2 \Delta ).
\end{split}\end{equation*}
From \cite[Theorem 14.9]{Zw} we know that $ \varphi_j ( -h^2 \Delta ) \in \Psi^{ -\infty }_{ h }( M_0 )$.
Hence $ { \rm{ ad } }_{ { \varphi_j }( -h^2 \Delta ) } \chi = O_{ H^{ -l } \to H^{ l+1 } }( h ) $ and \eqref{eq: locphi} follows.
Now a similar argument to the proof of \eqref{eq: loc} gives \eqref{eq: com}.

We note that $ A = h^{ -1 } [ \chi , \varphi ( -h^2 \Delta ) ] \in \Phi_h^{ -\infty } ( M_0 ) $, therefore $ A = O_{ L^q \to L^q }( 1 ) $ by \cite[Lemma 2.2]{KTZ}.
\end{proof}

\begin{proof}[Proof of Proposition \ref{p: str} assuming Lemma \ref{l: specloc}.]
Let $ \varphi_0 \in C_0^{ \infty }( \RR ) $, $ \varphi \in C_0^{ \infty } ( \RR \setminus \{ 0 \} ) $ such that
\begin{equation}
\varphi_0 ( \lambda ) + \sum_{ k=1 }^{ \infty } \varphi ( 2^{ -2k } \lambda ) = 1
\end{equation}
for all $ \lambda $. Then
\begin{equation}\begin{split}
 \label{eq: prostr}
\| \chi e^{ -i t \Delta } u_0 \|_{ L^p ; L^q } 
\leq & C ( \| \varphi_0 ( \Delta ) \chi e^{ -i t \Delta } u_0 \|_{ L^p ; L^q } + \| ( \sum_{ k = 1 }^{ \infty } \| \varphi ( 2^{ -2 k } \Delta ) \chi e^{ -i t \Delta } u_0 \|_{ L^q }^2 )^{ \frac{ 1 }{ 2 } } \|_{ L^p } ) \\
\leq & C (\| \varphi_0 ( \Delta ) \chi e^{ -i t \Delta } u_0 \|_{ L^p ; L^q } + ( \sum_{ k = 1 }^{ \infty } \| \varphi ( 2^{ -2 k } \Delta ) \chi e^{ -i t \Delta } u_0 \|_{ L^p ; L^q }^2 )^{ \frac{ 1 }{ 2 } } ) \\
= & C ( \| \chi \varphi_0 ( \Delta ) e^{ -i t \Delta } u_0 \|_{ L^p ; L^q } + ( \sum_{ k = 1 }^{ \infty } \| \chi \varphi ( 2^{ -2 k } \Delta ) e^{ -i t \Delta } u_0 \|_{ L^p ; L^q }^2 )^{ \frac{ 1 }{ 2 } } ) \\
& + C ( \| [ \varphi_0 ( \Delta ), \chi ] e^{ -i t \Delta } u_0 \|_{ L^p ; L^q } + ( \sum_{ k = 1 }^{ \infty } \| [ \varphi ( 2^{ -2 k } \Delta ), \chi ] e^{ -i t \Delta } u_0 \|_{ L^p ; L^q }^2 )^{ \frac{ 1 }{ 2 } } ) \\
= : & { \rm{ I } } + { \rm{ II } }.
\end{split}\end{equation}
By Lemma~\ref{l: specloc},
\begin{equation}\begin{split}
{ \rm{ I } } : = & C ( \| \chi \varphi_0 ( \Delta ) e^{ -i t \Delta } u_0 \|_{ L^p ; L^q } + ( \sum_{ k = 1 }^{ \infty } \| \chi \varphi ( 2^{ -2 k } \Delta ) e^{ -i t \Delta } u_0 \|_{ L^p ; L^q }^2 )^{ \frac{ 1 }{ 2 } } ) \\
\leq & C ( \| u_0 \|_{ L^2 } + ( \sum_{ k = 1 }^{ \infty } | \log{ 2^{ -k } } |^2 \| \varphi ( 2^{ -2 k } \Delta ) u_0 \|_{ L^2 } )^{ \frac{ 1 }{ 2 } } ) \\
\leq & C ( \| u_0 \|_{ L^2 } + ( \sum_{ k = 1 }^{ \infty } 2^{ 2k \epsilon } \| \varphi ( 2^{ -2 k } \Delta ) u_0 \|_{ L^2 }^2 )^{ \frac{ 1 }{ 2 } } ) \\
\leq & C \| u_0 \|_{ H^{ \epsilon } }.
\end{split}\end{equation}
For $ { \rm{ II } } $: by \eqref{eq: loc} we know that
\begin{equation}
 \label{eq: II}
\| [ \varphi ( -h^2 \Delta ) , \chi ] e^{ -i t \Delta } u_0 \|_{ L^p ; L^q } \leq h \| A \chi_1 \varphi_1 ( -h^2 \Delta ) e^{ -i t \Delta } u_0 \|_{ L^p ; L^q } + \| R ( h ) e^{ -i t \Delta } u_0 \|_{ L^p ; L^q }.
\end{equation}
By Lemma~\ref{l: com}
\begin{equation}\begin{split}
 \label{A}
\| A \chi_1 \varphi_1 ( -h^2 \Delta ) e^{ -i t \Delta } u_0 \|_{ L^p ; L^q } 
\leq & C \| \chi_1 \varphi_1 ( -h^2 \Delta ) e^{ -i t \Delta } u_0 \|_{ L^p ; L^q } \\
\leq & C | \log{ h } | \| \varphi ( -h^{ 2 } \Delta ) e^{ -i t \Delta } u_0 \|_{ L^2 }.
\end{split}\end{equation}
For the last term in \eqref{eq: II}:
\begin{equation}\begin{split}
 \label{eq: R}
\| R ( h ) e^{ -i t \Delta } u_0 \|_{ L^p ; L^q } 
\leq & C \| R( h ) e^{ -i t \Delta } u_0 \|_{ L^p ; H^{ 1 - 2 / p } } \\
\leq & C h \| e^{ -i t \Delta } u_0 \|_{ L^p ; L^2 } \leq C h \| u_0 \|_{ L^2 }
\end{split}\end{equation}
since $ e^{ -i t \Delta } $ preserves the $ L^2 $ norm.

Finally, we have
\begin{equation}\begin{split}
{ \rm{ II } } : = & C ( \| [ \varphi_0 ( \Delta ), \chi ] e^{ -i t \Delta } u_0 \|_{ L^p ; L^q } + ( \sum_{ k = 1 }^{ \infty } \| [ \varphi ( 2^{ -2 k } \Delta ), \chi ] e^{ -i t \Delta } u_0 \|_{ L^p ; L^q }^2 )^{ \frac{ 1 }{ 2 } } ) \\
\leq & C ( \| u_0 \|_{ L^2 } + ( \sum_{ k = 1 }^{ \infty } | 2^{ -k } \log{ 2^{ -k } } |^2 \| \varphi ( 2^{ -2 k } \Delta ) u_0 \|_{ L^2 }^2 + 2^{ -2 k } \| u_0 \|_{ L^2 }^2 )^{ \frac{ 1 }{ 2 } } )\\
\leq & C ( \| u_0 \|_{ L^2 } + ( \sum_{ k = 1 }^{ \infty } 2^{ 2 k \epsilon } \| \varphi ( 2^{ -2 k } \Delta ) u_0 \|_{ L^2 }^2 )^{ \frac{ 1 }{ 2 } } + ( \sum_{ k = 1 }^{ \infty } 2^{ -2 k } \| u_0 \|_{ L^2 }^2 )^{ \frac{ 1 }{ 2 } } ) \\
\leq & C \| u_0 \|_{ H^{ \epsilon } } .
\end{split}\end{equation}
\end{proof}
We now turn to the proof of Lemma \ref{l: specloc}. We need the following lemma.

\begin{lemm} (Local smoothing with logarithmic loss).
 \label{l: locsmth}
Suppose $ M = \Gamma \backslash \mathbb{ H } $, $ \chi \in C_0^{ \infty }( M ) $, $ \varphi \in C_0^{ \infty } ( ( \frac{ 1 }{ 2 } , 2 ) ,\mathbb{ R } ) $, and $ u_0 \in C_0^{ \infty } ( M ) $. Then
\begin{equation}
 \label{eq: locsmth}
\| \chi \varphi ( h^2 \Delta_M ) e^{ -i t \Delta_M } u_0 \|_{ L^2 ( [ 0 , 1 ] , L^2 ( M ) ) } \leq C ( h | \log{ h } | )^{ \frac{ 1 }{ 2 } } \| u_0 \|_{ L^2 ( M ) }.
\end{equation}
\end{lemm}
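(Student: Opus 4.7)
The proof follows Datchev's framework \cite{Da}: deduce local smoothing from a cut-off resolvent bound via Kato's smoothing principle, and then feed in the resolvent bound from Bourgain--Dyatlov \cite{BD}. Write $H := \Delta_M$, so that $e^{-itH}$ is the Schr\"odinger propagator, and let $R(z) := (H-z)^{-1}$.

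The first step is to reduce to a resolvent estimate. Factor $\varphi = \varphi_1 \varphi_2$ with $\varphi_1, \varphi_2 \in C_0^\infty((\tfrac{1}{2}, 2),\RR)$ (for instance $\varphi_1 = \varphi$ and $\varphi_2 \equiv 1$ on $\supp \varphi$). Since $\varphi_2(h^2 H)$ commutes with $e^{-itH}$, one has
\[
\chi\, \varphi(h^2 H)\, e^{-itH} u_0 \;=\; A\, e^{-itH} w_0, \qquad A := \chi\, \varphi_1(h^2 H), \quad w_0 := \varphi_2(h^2 H) u_0,
\]
with $\|w_0\|_{L^2} \leq C \|u_0\|_{L^2}$. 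Kato's smoothing inequality (Plancherel in time combined with the spectral theorem) then yields
\[
\int_\RR \|A\, e^{-itH} w_0\|_{L^2}^2 \, dt \;\leq\; C \sup_{\lambda \in \RR} \bigl\| A R(\lambda + i0) A^* \bigr\|_{L^2 \to L^2}\, \|w_0\|_{L^2}^2.
\]
Because $\chi$ and $\varphi_1$ are real, functional calculus identifies $A R(\lambda+i0) A^* = \chi\, \varphi_1^2(h^2 H) R(\lambda+i0)\, \chi$. For $\lambda$ outside a fixed neighborhood of $[\tfrac{1}{2}h^{-2}, 2h^{-2}]$ the multiplier $\varphi_1^2(h^2\mu)/(\mu - \lambda - i0)$ is smooth in $\mu$ and uniformly $O(h^2)$, so its contribution is harmless, and only $\lambda \sim h^{-2}$ matters; there one simply bounds $\|A R(\lambda+i0) A^*\|_{L^2 \to L^2} \leq C \|\chi R(\lambda+i0) \chi\|_{L^2 \to L^2}$.

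The second step is to invoke Bourgain--Dyatlov: their Theorem 2 gives the semiclassical bound $\|\chi(-h^2 \Delta_M - z)^{-1}\chi\|_{L^2\to L^2} \leq C h^{-1}|\log h|$ for $z$ in a neighborhood of $1$ with $\Im z \geq -c/|\log h|$, which in particular controls the boundary value at $\Im z = 0^+$. Rescaling $z = h^2\lambda$ converts this into $\|\chi R(\lambda+i0)\chi\|_{L^2\to L^2} \leq C h|\log h|$ for $\lambda \sim h^{-2}$. Inserting into the previous step gives
\[
\int_\RR \|\chi\,\varphi(h^2 H) e^{-itH} u_0\|_{L^2}^2 \, dt \;\leq\; C\, h\,|\log h|\; \|u_0\|_{L^2}^2,
\]
and restricting the integral to $t \in [0,1]$ yields \eqref{eq: locsmth}. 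The main technical point requiring care is the passage from the strip estimate of \cite{BD} to a usable boundary-value bound on $R(\lambda + i0)$, together with the justification of Kato smoothing on this non-compact manifold; both are standard for convex co-compact surfaces and are exactly what the abstract framework of \cite{Da} is built to handle.
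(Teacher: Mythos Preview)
Your proposal is correct and follows essentially the same route as the paper: obtain the cut-off resolvent bound $\|\chi(h^2\Delta_M-(1\pm i0))^{-1}\chi\|\leq Ch^{-1}|\log h|$ from Bourgain--Dyatlov, then convert it to the local smoothing estimate via Kato's argument. The paper's proof is terser---it simply cites \cite[Theorem~7.2]{DZ} for the Kato step with $K(h)=\log(1/h)$---whereas you spell out the smoothing inequality and the frequency localization explicitly, but the substance is the same.
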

\begin{proof}[Proof of Lemma \ref{l: locsmth}.]
From Theorem 2 in \cite{BD} and the proof of \cite[inequality (6.3.10)]{DZ}, we have the following bound:
\begin{equation}
 \label{eq: res}
\| \chi ( h^2 \Delta_M - ( 1 \pm i \epsilon ) )^{ -1 } \chi \|_{ L^2 ( M ) \rightarrow L^2 ( M ) } \leq C \frac{ | \log { h } | }{ h } ,
\end{equation}
for $ 0 < h < h_0 \ll 1 $ with $ C $ independent of $ h $.
We now use a modification of Kato's argument as presented in \cite[Theorem 7.2]{DZ}. In the notation of that reference, we take $ K ( h ) = \log( 1 / h ) $ to obtain \eqref{eq: locsmth}.
\end{proof}

\Remark
From the estimate of the resolvent \eqref{eq: res}, as explained in \cite[Section 7.1]{DZ}, we have the following estimate
\begin{equation}
 \label{eq: Hhalf}
\| \tilde { \chi } e^{ i t \Delta_M } u_0 \|_{ L^{ 2 } ( [ 0 , 1 ] , H^{ \frac{ 1 }{ 2 } } ( M ) ) } \leq C \| u_0 \|_{ H^{ \epsilon } }.
\end{equation}

Now we state a semiclassical dispersive estimate which together with Lemma~\ref{l: KT} gives Strichartz estimates for localized solutions. For the proof of the dispersive estimates, we refer to the proof of Lemma 2.5 in \cite{BGT} and \cite[(4.8)]{KTZ}. Note that though Lemma 2.5 in \cite{BGT} was proved for compact manifolds, the argument applies without change since we are only concern with the compact region.

\begin{lemm}(Semiclassical dispersion estimate).
\label{l: dis}
Suppose $ M = \Gamma \backslash \mathbb{ H } $, $ \varphi \in C_0^{ \infty } ( \mathbb{ R } ) $, and $ \chi \in C_0^{ \infty } ( M ) $. Then there exists $ \alpha > 0 $, $ C > 0 $, such that for all $ u_0 \in C_0^{ \infty } ( M ) $, $ h \in ( 0 , 1 ] $, we have
\begin{equation}
\label{eq: dis}
\| \chi e^{ -i t \Delta_M } \varphi ( h^2 \Delta_M ) \chi u_0 \|_{ L^{ \infty } ( M ) } \leq \frac{ C }{ | t | + h^{ 2 } } \| u_0 \|_{ L^1 ( M ) }
\end{equation}
for every $t \in [ - \alpha h , \alpha h ] $.
\end{lemm}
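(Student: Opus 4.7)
The strategy is to construct a short-time WKB parametrix for the semiclassically truncated Schrödinger propagator and then analyze its Schwartz kernel via stationary phase in $\xi$. The claim is the standard two-dimensional dispersive bound $|K_h(t,x,y)|\leq C/(|t|+h^2)$ for the kernel $K_h$ of $\chi e^{-it\Delta_M}\varphi(h^2\Delta_M)\chi$, and it immediately yields the $L^1\to L^\infty$ estimate by integration against $u_0$.

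First, because both cutoffs $\chi$ are compactly supported in the compact piece $M_0$, I would pass to a coordinate chart covering a neighborhood of $\supp(\chi)$. There the metric becomes a smooth positive-definite $g^{ij}(x)$ on an open subset of $\RR^2$, and by the Helffer--Sjöstrand functional calculus the operator $\varphi(h^2\Delta_M)$ is, modulo an $O(h^\infty)_{H^{-N}\to H^N}$ remainder whose kernel contributes negligibly to the $L^1\to L^\infty$ bound, a semiclassical pseudodifferential operator with principal symbol $\varphi(|\xi|_g^2)$ compactly supported in the shell $|\xi|_g\sim 1$.

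Next, I would construct a WKB parametrix for $e^{-it\Delta_M}\varphi(h^2\Delta_M)$ on the short time scale $|t|\leq \alpha h$, of the form
\begin{equation*}
 K_h(t,x,y) \;=\; \frac{1}{(2\pi h)^2}\int_{\RR^2} e^{i(\psi(t,x,\xi)-y\cdot\xi)/h}\, a(t,x,\xi;h)\, d\xi \;+\; O_{L^\infty}(h^\infty),
\end{equation*}
where the phase $\psi$ solves the Hamilton--Jacobi equation $\partial_t\psi + g^{ij}(x)\partial_i\psi\,\partial_j\psi = 0$ with initial condition $\psi(0,x,\xi)=x\cdot\xi$, and the amplitude $a$, compactly supported in $\xi$ in the shell $|\xi|_g\sim 1$, is constructed by solving a sequence of transport equations along the Hamiltonian flow. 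For $\alpha$ small enough, $\psi$ and $a$ exist smoothly, and one has the expansions $\psi(t,x,\xi)=x\cdot\xi - t|\xi|_g^2+O(t^2)$ and $\partial_\xi\psi(t,x,\xi)-y=(x-y) - 2t\,g^{-1}(x)\xi + O(t^2)$.

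Finally, I would estimate $K_h$ pointwise in two regimes. For $|t|\leq h^2$, the trivial bound on the oscillatory integral (the amplitude is supported in a fixed compact set in $\xi$) gives $|K_h|\leq Ch^{-2}$. For $h^2\leq |t|\leq \alpha h$, if $|x-y|$ is large compared to $|t|$ then $|\partial_\xi\psi - y|\gtrsim |x-y|$ and repeated integration by parts in $\xi$ produces decay faster than any negative power of $h$; if instead $|x-y|\lesssim |t|$, the Hessian of the phase in $\xi$ is $-2t\,g^{-1}(x)+O(t^2)$, which has determinant of size $\sim t^2$, so two-dimensional stationary phase in $\xi$ gains a factor $(h/|t|)^{1}$ relative to the $h^{-2}$ normalization, yielding $|K_h|\leq C|t|^{-1}$. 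The two regimes combine to $C/(|t|+h^2)$, and testing against $u_0$ gives \eqref{eq: dis}.

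The main obstacle is verifying that the WKB construction is genuinely valid on the full interval $|t|\leq\alpha h$ with remainders controlled in an $L^1\to L^\infty$ sense, rather than only in $L^2\to L^2$. This is exactly the content of \cite[Lemma 2.5]{BGT}, which is proved on compact manifolds; as noted, the argument transfers verbatim because the cutoffs $\chi$ confine the analysis to a fixed compact coordinate chart inside $M_0$, so the fact that $M$ itself is noncompact plays no role.
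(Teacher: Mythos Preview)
Your approach matches the paper's: the paper does not give a self-contained proof of this lemma but simply refers to \cite[Lemma~2.5]{BGT} and \cite[(4.8)]{KTZ}, with the remark that the cutoffs $\chi$ localize everything to a compact chart so that the compact-manifold argument applies verbatim. Your sketch is a reasonable outline of exactly that construction.

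There is, however, a scaling slip in your WKB ansatz that makes the stationary-phase step fail as written. Applying $(i\partial_t-\Delta_M)$ to $e^{i\psi/h}a$ produces a term $-h^{-1}(\partial_t\psi)\,a$ from the time derivative but a term $-h^{-2}|\nabla_x\psi|_g^2\,a$ from the Laplacian; these live at different orders in $h$, so $\partial_t\psi+|\nabla_x\psi|_g^2=0$ cannot be the correct eikonal equation for this propagator. The parametrix in \cite{BGT} is written in the rescaled time $s=t/h\in[-\alpha,\alpha]$, with phase $\psi(s,x,\xi)$ solving $\partial_s\psi+|\nabla_x\psi|_g^2=0$, hence $\psi=x\cdot\xi-s\,|\xi|_g^2+O(s^2)$. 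The $\xi$-Hessian is then $-2s\,g^{-1}(x)+O(s^2)$, of size $|t|/h$ rather than $|t|$, and two-dimensional stationary phase gains a factor $h/|s|=h^2/|t|$; combined with the prefactor $(2\pi h)^{-2}$ this gives the desired bound $C/|t|$. With your Hessian of size $|t|$ the gain is only $h/|t|$, and $h^{-2}\cdot h/|t|=1/(h|t|)$, which is not the $|t|^{-1}$ you assert in the next clause. Once the time rescaling is inserted, the rest of your outline is correct.
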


\begin{lemm}(Keel-Tao \cite{KT})
 \label{l: KT}
Let $ ( X , \mathcal{ S } , \mu ) $ be a $ \sigma $-finite measured space, and $ U : \RR \rightarrow B ( L^{ 2 } ( X , \mathcal{ S } , \mu ) )$  be a weakly measurable map satisfying, for some $ C $, $ \sigma > 0 $,
\begin{equation}
\| U ( t ) \|_{ L^2 \rightarrow L^2 } \leq C, \quad t \in \RR,
\end{equation}
and
\begin{equation}
\| U ( t_1 ) U ( t_2 )^* f \|_{ L^{ 1 } \rightarrow L^{ \infty } } \leq \frac{ A }{ | t_1 - t_2 |^{ \sigma } }, \quad t_1 \neq t_2 \in \RR.
\end{equation}
Then for any $ p , q \in [ 1 , \infty ] $ satisfying $ \frac{ 2 }{ p } + \frac{ 2 \sigma } { q } = \sigma $, $ p \geq 2 $ and $ ( p , q ) \neq ( 2 , \infty ) $, we have
\begin{equation}
\| U \|_{ L^2 \rightarrow L^p ; L^q } \leq C^{ \prime },
\end{equation}
for some constant $ C^{ \prime } = C^{ \prime } ( C , \sigma , p , q ) $.
\end{lemm}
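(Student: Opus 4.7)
The plan is to prove this by the standard $TT^*$ argument combined with interpolation and the Hardy--Littlewood--Sobolev inequality in the non-endpoint range, and by the bilinear interpolation technique of Keel--Tao at the endpoint $p=2$. First, setting $T u_0 (t) := U(t) u_0$, the estimate $\|T u_0\|_{L^p_t L^q_x} \lesssim \|u_0\|_{L^2}$ is equivalent by duality to the $TT^*$ estimate
\begin{equation*}
\Bigl\| \int U(t) U(s)^* f(s)\, ds \Bigr\|_{L^p_t L^q_x} \lesssim \|f\|_{L^{p'}_t L^{q'}_x}.
\end{equation*}

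Next I would establish the pointwise-in-time kernel bound. Interpolating the hypothesis $\|U(t_1)U(t_2)^*\|_{L^1 \to L^\infty} \leq A|t_1-t_2|^{-\sigma}$ against the $L^2$-boundedness $\|U(t_1)U(t_2)^*\|_{L^2\to L^2} \leq C^2$ via the Riesz--Thorin theorem yields
\begin{equation*}
\|U(t_1)U(t_2)^* f\|_{L^q} \leq C_q |t_1 - t_2|^{-\sigma(1-2/q)} \|f\|_{L^{q'}}, \qquad 2 \leq q \leq \infty.
\end{equation*}
The scaling relation $\tfrac{2}{p} + \tfrac{2\sigma}{q} = \sigma$ is exactly $\sigma(1-\tfrac{2}{q}) = \tfrac{2}{p}$, so once the spatial norms are absorbed, the remaining time integral is a fractional integration of order $1 - \tfrac{2}{p}$. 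For the non-endpoint case $p > 2$, Hardy--Littlewood--Sobolev on $\RR$ with this exponent applies (since $p > 2$ means $p' < 2 < p$) and yields the required $L^{p'}_t \to L^p_t$ bound, completing the proof away from the endpoint.

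The main obstacle is the endpoint $p=2$, which requires $\sigma > 1$ and $q = \frac{2\sigma}{\sigma-1} < \infty$. Here HLS fails logarithmically and one must work harder, following Keel--Tao. The idea is to dyadically decompose the bilinear form
\begin{equation*}
B(f,g) = \sum_{j \in \ZZ} \iint_{|t-s|\sim 2^j} \langle U(t)U(s)^* f(s), g(t)\rangle\, ds\, dt =: \sum_j B_j(f,g),
\end{equation*}
and to prove that $B_j : L^2_t L^{a'}_x \times L^2_t L^{b'}_x \to \CC$ is bounded with norm $\lesssim 2^{j[1 - \sigma(1-1/a - 1/b)]}$ for $(1/a, 1/b)$ in an open neighborhood of the diagonal point $(1/q, 1/q)$; this follows from the dispersive bound and Cauchy--Schwarz in time. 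The critical exponent corresponds to exponent $0$, where summation in $j$ fails, but bilinear real interpolation on the Lorentz scale $L^{q,2}$ off the diagonal recovers summability and yields boundedness of $\sum_j B_j$ into $L^{q,2}_x \subset L^q_x$. The delicate point is to verify that the off-diagonal estimates hold with geometrically decaying constants on both sides of the critical line so that real interpolation produces the correct endpoint, and to exclude $(p,q) = (2,\infty)$, where the method genuinely breaks down.
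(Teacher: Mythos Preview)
The paper does not give its own proof of this lemma: it is stated with attribution to Keel--Tao \cite{KT} and used as a black box throughout. Your outline is a correct sketch of the original Keel--Tao argument---the $TT^*$ reduction, Riesz--Thorin interpolation between the energy and dispersive bounds, Hardy--Littlewood--Sobolev for $p>2$, and the dyadic bilinear real-interpolation scheme at the endpoint $p=2$ are exactly the ingredients in \cite{KT}---so there is nothing in this paper to compare against.
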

We will also use the well-known lemma of Christ and Kiselev:
\begin{lemm}[Christ-Kiselev, \cite{CK}]
 \label{l: CK}
Suppose $ X $ and $ Y $ are Banach spaces and $ K \in C ( B ( X , Y ) ) $, where $ B ( X , Y ) $ is the space of bounded linear mappings from $ X $ to $ Y $. Suppose $ - \infty \leq a < b \leq \infty $. Let
\[
T f ( t ) = \int_a^b K ( t , s) f ( s ) ds, \quad W f = \int_a^t K ( t , s ) f ( s ) ds.
\]
If for $ 1 \leq p < q \leq \infty $
\begin{equation}
\| T \|_{ L^p ( ( a , b ) , X ) \rightarrow L^q ( ( a , b ) , Y ) } \leq C,
\end{equation}
then
\begin{equation}
\| W \|_{ L^p ( ( a , b ) , X ) \rightarrow L^q ( ( a , b ) , Y ) } \leq C^{ \prime },
\end{equation}
for some $ C^{ \prime } = C^{ \prime} ( p , q , C ) $.
\end{lemm}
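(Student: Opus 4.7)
The plan is to adapt the classical stopping-time / dyadic-decomposition argument to the interval $(a,b)$, tailoring it to the $L^p$-mass distribution of the input $f$. By density and homogeneity I would first normalize to $\|f\|_{L^p((a,b),X)} = 1$ and work with $f$ continuous of compact support so that $Wf(t)$ is defined for every $t$. Then I would construct a sequence of partitions of $(a,b)$: for each generation $n \geq 0$, choose points $a = t_{n,0} < t_{n,1} < \cdots < t_{n,2^n} = b$ so that the intervals $I_{n,k} := (t_{n,k-1}, t_{n,k}]$ each carry equal $f$-mass $\int_{I_{n,k}} \|f(s)\|_X^p\, ds = 2^{-n}$. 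Writing $F(t) := \int_a^t \|f(s)\|_X^p\, ds \in [0,1]$ and expanding $F(t) = \sum_{n \geq 1} \epsilon_n(t)\, 2^{-n}$ in binary, the set $(a,t]$ decomposes (up to a null set) as $\bigsqcup_{n:\, \epsilon_n(t)=1} I_{n,k_n(t)}$ for appropriate indices $k_n(t)$, so that
\[
W f(t) = \sum_{n \geq 1} T_n f(t), \qquad T_n f(t) := \epsilon_n(t)\, T\bigl(\chi_{I_{n, k_n(t)}} f\bigr)(t).
\]

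The next step is to estimate each $T_n f$ in $L^q$. The key observation is that for fixed $n$, as $t$ varies, the sets $\{t : \epsilon_n(t) = 1,\ k_n(t) = k\}$ are pairwise disjoint across $k$, since $k_n(t)$ is weakly monotone in $t$ on the locus $\{\epsilon_n(t) = 1\}$. Applying the hypothesis $\|T\|_{L^p \to L^q} \leq C$ to each $\chi_{I_{n,k}} f$ and using $\|\chi_{I_{n,k}} f\|_{L^p} = 2^{-n/p}$ gives
\[
\|T_n f\|_{L^q}^q \;\leq\; \sum_{k=1}^{2^n} \|T(\chi_{I_{n,k}} f)\|_{L^q}^q \;\leq\; C^q \sum_{k=1}^{2^n} 2^{-nq/p} \;=\; C^q\, 2^{n(1 - q/p)},
\]
i.e., $\|T_n f\|_{L^q} \leq C\cdot 2^{n(1/q - 1/p)}$. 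Since $p < q$ the exponent is strictly negative, so Minkowski's inequality yields
\[
\|W f\|_{L^q} \;\leq\; \sum_{n \geq 1} \|T_n f\|_{L^q} \;\leq\; \frac{C}{1 - 2^{1/q - 1/p}},
\]
giving the required bound with $C' = C(1 - 2^{1/q - 1/p})^{-1}$.

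The main obstacle is the combinatorial bookkeeping: one must carefully verify that the sets $\{t : k_n(t) = k,\, \epsilon_n(t) = 1\}$ really are pairwise disjoint across $k$, and handle potential atoms of the measure $\|f(s)\|_X^p\, ds$ at dyadic endpoints (which I would side-step by a right-continuous choice of $t_{n,k}$ or by first mollifying $f$ and passing to a limit). The boundary cases $p = 1$ or $q = \infty$ need a small separate treatment by duality or by a direct interpretation of the relevant norms, but neither affects the core geometric structure of the dyadic scheme.
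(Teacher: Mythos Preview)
The paper does not supply a proof of this lemma at all; it is stated as a quotation of the Christ--Kiselev result \cite{CK} and used as a black box. Your sketch reproduces the original dyadic mass-partition argument of Christ and Kiselev and is correct as outlined, so there is nothing in the paper to compare it against.
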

Now we are in the position to prove Lemma~\ref{l: specloc}. The proof given here is based on the proof of Theorem 3.3 in \cite{BGH}, with some of the ideals also presented in \cite{ST}.
\begin{proof}[Proof of Lemma~\ref{l: specloc}.]
First of all, form Lemma~\ref{l: dis} and Lemma~\ref{l: KT}, we have
\begin{equation}
 \label{eq: ch}
\| \chi e^{ -i t \Delta_M } \varphi ( h^2 \Delta_M ) u_0 \|_{ L^p ( [ 0 , c h ] , L^q ( M ) ) } \leq C \| u_0 \|_{ L^2 ( M ) }
\end{equation}
for some $ c > 0 $.
By Littlewood-Paley theory, we can assume that $ u_0 $ is localized near frequency $ h^{ - 1 } $ in the sense that $ \varphi ( h^2 \Delta_M ) u_0 = u_0 $. Then we have
\begin{equation}
 \label{eq: locch}
\| \chi e^{ -i t \Delta_M } u_0 \|_{ L^p ( [ 0 , c h ] , L^q ( M ) ) } \leq C \| u_0 \|_{ L^2 ( M ) }.
\end{equation}
Now we choose a time cut-off function $ \psi $ such that $ \psi \in C_0^{ \infty } [ -1 , 1 ] $, $ \psi ( 0 ) = 1 $, and $ \sum_{ j \in \mathbb{ Z } } \psi ( s - j ) = 1 $.
Denote $ u = e^{ -i t \Delta_M } u_0 $, then
\begin{equation}
\chi u = \sum_{ j \in \mathbb{ Z } } \psi( s / h - j ) \chi u = : \sum_{ j \in \mathbb{ Z } } u_j.
\end{equation}
Let $ h = \frac{ 1 }{ N } $, then $ u_0 $ and $ u_N $ can be estimated by \eqref{eq: locch}. For $ 1 \leq j \leq N - 1 $, note
\begin{equation}
( i \partial_t - \Delta_M ) u_j = \frac{ i }{ h } \psi^{ \prime } ( t / h - j ) \chi u - \psi ( t / h - j ) ( \Delta_M \chi u + 2 \nabla \chi \nabla u ) = : w_j.
\end{equation}
By the local smoothing estimate with logarithmic loss we have
\begin{equation}
 \label{eq: l2l2}
\| \tilde{ \chi } u \|_{ L^2 ( [ 0 , 1 ] , L^2 ( M ) ) } \leq C ( h | \log{ h } | )^{ \frac{ 1 }{ 2 } } \| u_0 \|_{ L^2 ( M ) }
\end{equation}
for all $ \tilde{ \chi } \in C_0^{ \infty } ( M ) $. Let $ \tilde{ \chi } \equiv 1 $ on the support of $ \chi $, then
\begin{equation}
 \label{eq: wj}
\sum_{ j } \| w_j \|^2_{ L^2 ( [ 0 , 1 ] , L^2 ( M ) ) } \leq \frac{ 1 }{ h^2 } \| \tilde{ \chi } u \|^2_{ L^2 ; L^2 } \leq C \frac{ | \log{ h } | }{ h } \| u_0 \|^2_{ L^2 }.
\end{equation}
Using Duhamel's formula, we get
\begin{equation}
u_j = -i \int_{ -\infty }^t e^{ -i ( t - s ) \Delta_M } w_j ( s ) ds.
\end{equation}
Let
\begin{equation}\begin{split}
\tilde{ u_j } ( t )
& = -i \int_{ ( j - 1 ) h }^{ ( j + 1 ) h } e^{ -i ( t - s ) \Delta_M } w_j ( s ) ds
= -i e^{ -i t \Delta_M } \int_{ ( j - 1 ) h }^{ ( j + 1 ) h } e^{ i s \Delta_M } w_j ( s ) ds.
\end{split}\end{equation}
Using the dual estimate of \eqref{eq: l2l2} we have
\begin{equation}
\| \int_{ ( j - 1 ) h }^{ ( j + 1 ) h } e^{ i s \Delta_M } w_j ( s ) ds \|_{ L^2 ( M ) } \leq C ( h | \log{ h } | )^{ \frac{ 1 }{ 2 } } \| w_j \|_{ L^2 ; L^2 }.
\end{equation}
Now by \eqref{eq: locch} we get
\begin{equation}
\| \tilde { u_j } \|_{ L^p ; L^q } \leq C ( h | \log{ h } | )^{ \frac{ 1 }{ 2 } } \| w_j \|_{ L^2 ; L^2 }.
\end{equation}
From Lemma~\ref{l: CK} we know
\begin{equation}
\| u_j \|_{ L^p ; L^q } \leq C ( h | \log{ h } | )^{ \frac{ 1 }{ 2 } } \| w_j \|_{ L^2 ; L^2 }.
\end{equation}
Hence
\begin{equation}
\sum_{ j = 1 }^{ N - 1 } \| u_j \|^2_{ L^p ; L^q } \leq C ( h | \log{ h } | ) \sum_{ j = 1 }^{ N- 1 } \| w_j \|^2_{ L^2 ; L^2 } \leq C | \log{ h } |^2 \| u_0 \|^2_{ L^2 }.
\end{equation}
For $ p > 2 $, we have
\begin{equation}
( \sum_{ j = 1 }^{ N - 1 } \| u_j \|^p_{ L^p ; L^q } )^{ \frac{ 2 }{ p }} \leq C | \log{ h } |^2 \| u_0 \|^2_{ L^2 }.
\end{equation}
Finally we get
\begin{equation}
\| \chi u \|_{ L^p ; L^q } \leq C | \log{ h } | \| u_0 \|_{ L^2 }.
\end{equation}
\end{proof}

\section{Strichartz Estimates for the Funnel}
 \label{s: fun}

In this section we will give Strichartz estimates for the funnel. Considering the goal of this paper, we only need the following:
\begin{prop}(Strichartz estimates in the funnel).
 \label{p: noncptstr}
Suppose $ M = \Gamma \backslash \mathbb{ H } $, and \\ $ \chi \in C_0^{ \infty } ( M ) $ such that $ 1 - \chi $ is supported in the half funnels. Then for $ p , q \geq 2 $ satisfying $ ( p , q ) \neq ( 2 , \infty ) $ and $ \frac{ 1 }{ p } + \frac{ 1 }{ q } = \frac{ 1 }{ 2 } $ and any $ \epsilon > 0 $, we have
\begin{equation}
 \label{eq: noncptstr}
\| ( 1 - \chi ) e^{ -i t \Delta_M } u_0 \|_{ L^p ( [ 0 , 1 ] , L^q ( M ) ) } \leq C \| u_0 \|_{ H^{ \epsilon } ( M ) }.
\end{equation}
\end{prop}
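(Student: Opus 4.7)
The approach parallels the proof of Proposition~\ref{p: str}, exploiting the fact that the geodesic flow on the half-funnels is non-trapping: the overall architecture --- Littlewood-Paley decomposition (Lemma~\ref{l: PL}), short-time Strichartz via semiclassical dispersive bounds and Keel-Tao (Lemma~\ref{l: KT}), extension to $[0,1]$ via local smoothing and the Christ-Kiselev lemma (Lemma~\ref{l: CK}), and a final summation producing an $\epsilon$-loss --- is unchanged, but the key semiclassical inputs now hold \emph{without} any logarithmic loss.

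The first step is to reduce \eqref{eq: noncptstr} to a bound for spectrally localized data. Applying the Littlewood-Paley decomposition together with the commutator Lemma~\ref{l: com} exactly as in the derivation of \eqref{eq: prostr}, it suffices to establish
\[
\|(1-\chi) e^{-it\Delta_M}\varphi(h^2\Delta_M)u_0\|_{L^p([0,1], L^q(M))} \leq C \|u_0\|_{L^2(M)}
\]
uniformly in $h \in (0,1]$, for $\varphi \in C_0^\infty((1/2, 2))$. This localized estimate rests on two ingredients: a short-time dispersive bound
\[
\|(1-\chi) e^{-it\Delta_M}\varphi(h^2\Delta_M)(1-\chi) f\|_{L^\infty(M)} \leq \frac{C}{|t|+h^2}\|f\|_{L^1(M)}, \quad |t|\leq \alpha h,
\]
in analogy with Lemma~\ref{l: dis}, and a no-loss local smoothing estimate analogous to \eqref{eq: locsmth} but with $(h|\log h|)^{1/2}$ replaced by $h^{1/2}$ (e.g.\ derived from Kato's argument, exactly as in Lemma~\ref{l: locsmth}, but starting from a non-trapping resolvent bound $\|\tilde{\chi}(h^2\Delta_M-(1\pm i\epsilon))^{-1}\tilde{\chi}\|_{L^2\to L^2}\lesssim h^{-1}$ for $\tilde{\chi}$ supported in the funnel).

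Granted these two ingredients, Keel-Tao yields short-time Strichartz on $[0,\alpha h]$, and the Staffilani-Tataru patching of the proof of Lemma~\ref{l: specloc} (splitting $[0,1]$ into $\sim h^{-1}$ intervals of length $h$, Duhamel's formula, and Christ-Kiselev) extends this to $[0,1]$; summing over Littlewood-Paley frequency bands then introduces only the usual $\epsilon$-loss. The main technical obstacle is the rigorous derivation of the dispersive and local smoothing bounds on the non-compact funnel. The cleanest route is to lift the half-funnel $G_j$ to the full funnel $F_j=\langle z\mapsto k_j z\rangle\backslash\HH$ by reflection across the totally geodesic circle $\{r=0\}$, and further to the universal cover $\HH$, where the dispersive estimate $\|e^{-it\Delta_\HH}\|_{L^1\to L^\infty}\lesssim |t|^{-1}$ (for $|t|\leq 1$) and the limiting absorption principle on $\HH$ are classical; for spectrally localized data at frequency $h^{-1}$, only finitely many nearby translates of a fundamental domain contribute on the short time scale $|t|\leq\alpha h$, so both estimates descend from $\HH$ to $F_j$ and hence, via the reflection, to $G_j$. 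This yields \eqref{eq: noncptstr}.
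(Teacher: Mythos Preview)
Your plan has a genuine gap at the patching step. In the proof of Lemma~\ref{l: specloc} the time--partition argument produces forcing terms
\[
w_j = \tfrac{i}{h}\psi'(t/h-j)\chi u - \psi(t/h-j)\bigl(\Delta\chi\, u + 2\nabla\chi\cdot\nabla u\bigr),
\]
and the crucial fact is that \emph{both} pieces are supported where a compact cutoff $\tilde\chi$ lives, so local smoothing gives $\sum_j\|w_j\|_{L^2;L^2}^2\lesssim h^{-2}\|\tilde\chi u\|_{L^2;L^2}^2\lesssim h^{-1}|\log h|\,\|u_0\|_{L^2}^2$. If you replace $\chi$ by $1-\chi$, the commutator term $[\Delta_M,\chi]u$ is still compactly supported, but the term $\tfrac{i}{h}\psi'(t/h-j)(1-\chi)u$ is supported in the \emph{non-compact} half-funnel. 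There is no smoothing gain for $\|(1-\chi)u\|_{L^2([0,1];L^2)}$: by energy conservation this quantity is comparable to $\|u_0\|_{L^2}$, not $h^{1/2}\|u_0\|_{L^2}$. Hence $\sum_j\|w_j\|_{L^2;L^2}^2$ is of size $h^{-2}\|u_0\|_{L^2}^2$ and the argument only recovers the compact-manifold $1/p$-loss, not the $\epsilon$-loss. The no-loss resolvent bound you propose, for $\tilde\chi$ supported in the funnel, is in any case a statement about the resolvent of $\Delta_M$ and does not simply descend from $\mathbb H$ or $F_j$: on the unit time scale geodesics from the funnel reach the trapped set.

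The paper avoids this by \emph{not} patching in time. Instead it transfers the equation to the full funnel $F$: setting $w=(1-\chi)u$ one has $(i\partial_t-\Delta_F)w=-[\Delta_M,\chi]u$ with $w(0)=(1-\chi)u_0$, so by Duhamel
\[
w = e^{-it\Delta_F}(1-\chi)u_0 - \int_0^t e^{-i(t-s)\Delta_F}[\Delta_M,\chi]u(s)\,ds.
\]
The homogeneous term is handled by a global-in-time, loss-free Strichartz estimate on $F$ (Lemma~\ref{l: funstr}), obtained by summing the explicit $\mathbb H$-kernel over the cyclic group $\langle z\mapsto kz\rangle$ and applying Keel--Tao directly. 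For the Duhamel term the forcing $[\Delta_M,\chi]u$ \emph{is} compactly supported, so the $H^{1/2}$ local smoothing on $M$ with $\epsilon$-loss \eqref{eq: Hhalf} and its dual apply; a $TT^*$ argument together with Christ--Kiselev closes the estimate. The point is that the non-compactness is absorbed by the free propagator on $F$, while local smoothing is only invoked on the compactly supported commutator.
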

The strategy we will follow here is that we will use the cut-off function to restrict the Schr\"{o}dinger equation to the half-funnel where there is no trapping. Since we are dealing with surfaces, the funnel can always be assume to be $ F = \langle z \mapsto k z \rangle \setminus \mathbb{ H } $ for some $ k > 1 $. This will make some of the computations more explicit and direct. The main tools we will use are the following lemma and the Remark of Lemma~\ref{l: locsmth}.
\begin{lemm}
 \label{l: funstr}
Suppose $ F = \langle z \mapsto k z \rangle \setminus \mathbb{ H } $ is a funnel, then for $ u_0 \in C_0^{ \infty } ( F ) $ we have
\begin{equation}
\label{eq: funstr}
\| e^{ -i t \Delta_F } u_0 \|_{ L^p ( [ 0 , 1 ] , L^q ( F ) ) } \leq C \| u_0 \|_{ L^2 ( F ) }.
\end{equation}
\end{lemm}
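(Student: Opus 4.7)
The plan is to establish a short-time dispersive estimate for $e^{-it\Delta_F}$ and then invoke Lemma~\ref{l: KT} (Keel--Tao, with $\sigma=1$) to obtain the full family of admissible Strichartz norms. Concretely, the goal is
\[
 \|e^{-it\Delta_F}\|_{L^1(F)\to L^\infty(F)} \leq C|t|^{-1}, \qquad t\in(0,1],
\]
which, combined with the trivial $L^2$-unitarity of $e^{-it\Delta_F}$, yields \eqref{eq: funstr} for every admissible $(p,q)$ with $\frac{1}{p}+\frac{1}{q}=\frac{1}{2}$.

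To obtain the dispersive bound I would exploit the explicit representation $F = \Gamma_F\backslash\HH$, where $\Gamma_F = \langle\gamma_0\rangle$ is the infinite cyclic group generated by the hyperbolic isometry $\gamma_0 : z\mapsto kz$. By covering theory the Schr\"odinger kernel on $F$ is given by the Poincar\'e sum
\[
 K_F(t,\bar x,\bar y) \;=\; \sum_{n\in\ZZ} K_\HH(t,\tilde x,\gamma_0^n\tilde y)
\]
for any lifts $\tilde x,\tilde y\in\HH$ of $\bar x,\bar y\in F$. The first step is to extract from the explicit integral representation of $K_\HH$ on the hyperbolic plane (via the Abel transform / subordination from the closed-form Schr\"odinger kernel on $\HH^3$) a pointwise bound of the form
\[
 |K_\HH(t,x,y)| \;\leq\; C|t|^{-1}\phi\bigl(d_\HH(x,y)\bigr),\qquad t\in(0,1],
\]
with $\phi\in L^1(\RR_+)$; the natural form to expect is $\phi(d)\lesssim (1+d)e^{-d/2}$, which captures both the $|t|^{-1}$ diagonal singularity and the off-diagonal decay induced by negative curvature.

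The second step is summation: since $\gamma_0$ is hyperbolic with translation length $\log k>0$, one has $d_\HH(\tilde x,\gamma_0^n\tilde y)\geq |n|\log k - O(1)$ uniformly in the lifts, so $\sum_{n\in\ZZ}\phi(d_\HH(\tilde x,\gamma_0^n\tilde y))$ is bounded by a constant independent of $\bar x,\bar y\in F$. This produces the required dispersive estimate on $F$, after which Lemma~\ref{l: KT} with $\sigma=1$ delivers \eqref{eq: funstr} for all admissible $(p,q)$.

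The step I expect to be the main obstacle is the first one, namely extracting the pointwise kernel bound on $\HH$ with sufficient spatial decay: the operator bound $\|e^{-it\Delta_\HH}\|_{L^1\to L^\infty}\leq C|t|^{-1}$ is classical, but promoting it to the pointwise-with-decay form needed to absolutely sum the Poincar\'e series requires genuine stationary-phase analysis of the oscillatory integral defining the Schr\"odinger kernel on the hyperbolic plane. Once that bound is in hand, the remaining summation and the invocation of Keel--Tao are essentially bookkeeping.
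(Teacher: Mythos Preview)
Your overall strategy matches the paper's exactly: obtain a pointwise dispersive bound for the Schr\"odinger kernel on $\HH$ with spatial decay, sum over the cyclic group $\langle z\mapsto kz\rangle$ to get the kernel on $F$, and invoke Keel--Tao. The step you flag as the main obstacle is in fact available off the shelf: the paper quotes from Banica the bound $|K_{\HH}(t,z,z')|\leq C|t|^{-1}\bigl(\rho/\sinh\rho\bigr)^{1/2}$ with $\rho=d_{\HH}(z,z')$, which for large $\rho$ behaves like $\sqrt{\rho}\,e^{-\rho/2}$ and is of the form you anticipated.

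The genuine gap is in the step you dismiss as bookkeeping. Your lower bound $d_{\HH}(\tilde x,\gamma_0^n\tilde y)\geq |n|\log k - O(1)$ cannot hold uniformly in $\bar x,\bar y\in F$: the funnel is non-compact, any fundamental domain for $\Gamma_F$ has infinite hyperbolic diameter, and already for $n=0$ the distance $d_{\HH}(\tilde x,\tilde y)$ is unbounded as $\bar x,\bar y$ range over $F$. The triangle-inequality reasoning behind your claim gives only $d_{\HH}(\tilde x,\gamma_0^n\tilde y)\geq |n|\log k - d_{\HH}(\tilde x,\tilde y)$, and the subtracted term is not controlled. What \emph{does} hold uniformly is the separation $d_{\HH}(\gamma_0^n\tilde y,\gamma_0^m\tilde y)\geq |n-m|\log k$, which via the triangle inequality yields the linear counting bound $\#\{n:\rho_n\leq R\}\leq 2R/\log k+1$; that suffices to sum any weight $\phi(d)\lesssim e^{-cd}$ uniformly. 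The paper instead carries out the summation by a direct coordinate computation: from $\cosh\rho_n=1+|z-k^nz'|^2/(2yk^ny')$ one extracts $e^{-\rho_n}\leq 2k^nyy'/(y+k^ny')^2$, uses the shift invariance of the sum in $n$ to normalize $1\leq y/y'\leq k$, and then bounds the resulting series explicitly. Either route closes the argument, but the distance lower bound as you stated it does not.
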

Proposition~\ref{p: noncptstr} and Lemma~\ref{l: funstr} are direct results of \cite[Theorem 1.2 and Theorem 1.3]{Bo2}, but for the reader's convenience we give a self-contained argument here.

We first prove Proposition~\ref{p: noncptstr} using Lemma~\ref{l: funstr} and inequality \eqref{eq: Hhalf}.
\begin{proof}[Proof of Proposition~\ref{p: noncptstr}.]
Without loss of generality, we can assume that $ 1 - \chi $ is supported in a funnel $ F $. Then $ ( i \partial_t - \Delta_M ) u = 0 $ implies
\begin{equation}
( i \partial_t - \Delta_F ) ( 1 - \chi ) u = - [ \Delta_M , \chi ] u.
\end{equation}
Denote $ w = ( 1 - \chi ) u $, then
\begin{equation}\begin{cases}
( i \partial_t - \Delta_F ) w & = - [ \Delta_M , \chi ] u \\
w|_{ t = 0 } & = ( 1 - \chi ) u_0.
\end{cases}\end{equation}
By the Duhamel's formula,
\begin{equation}
w = e^{ -i t \Delta_F } ( 1 - \chi ) u_0 - \int_0^t e^{ -i ( t - s ) \Delta_F } [ \Delta_M , \chi ] u ( s ) ds.
\end{equation}
Denote $ \tilde{ w } = \int_0^1 e^{ -i ( t - s ) \Delta_F } [ \Delta_M , \chi ] u ( s ) ds $. Then by the Christ-Kiselev lemma, we only need to show that
\begin{equation}
 \label{eq: tilw}
\| \tilde{ w } \|_{ L^p ( [ 0 , 1 ] ; L^q ( F ) ) } \leq C \| u_0 \|_{ H^{ 2 \epsilon } ( M ) }.
\end{equation}

Let $ \tilde{ \chi } $ be a cut-off function such that $ \tilde{ \chi } = 1 $ on the support of $ \chi $, then
\begin{equation}\begin{split}
\| \tilde{ \chi } e^{ i t \Delta_M } u_0 \|_{ L^2 ; H^{ \frac{ 1 }{ 2 } + \epsilon } }
\leq & \| [ \tilde{ \chi } , ( I + \Delta )^{ \frac{ \epsilon }{ 2 } } ] e^{ i t \Delta_M } u_0 \|_{ L^2 ; H^{ \frac{ 1 }{ 2 } } }
+ \| \tilde{ \chi } e^{ i t \Delta_M } ( I + \Delta )^{ \frac{ \epsilon }{ 2 } } u_0 \|_{ L^2 ; H^{ \frac{ 1 }{ 2 } } } \\
\leq & \| u_0 \|_{ H^{ 2 \epsilon } }
\end{split}\end{equation}
since $ [ \tilde{ \chi } , ( I + \Delta )^{ \frac{ \epsilon }{ 2 } } ] $ is a differential operator of order $ \epsilon - 1 $. Note that $ [ \Delta_M , \chi ] $ is a first-order differential operator, we find
\begin{equation}
\| [ \Delta_M , \chi ] u \|_{ L^2 ( [ 0 , 1 ] , H_{ { \rm{ comp } } }^{ - \frac{ 1 }{ 2 } + \epsilon }( F ) ) } \leq C \| u_0 \|_{ H^{ 2 \epsilon } ( M ) }.
\end{equation}

Now we define $ T : L^2 ( F ) \rightarrow L^p ( [ 0 , 1 ] , L^q ( F ) ) $, $ u \mapsto e^{ -i t \Delta_F } u $. From Lemma~\ref{l: funstr} we know $ T $ is a bounded operator. Let $ T^* : L^2 ( [ 0 , 1 ] , H_{ { \rm{ comp } } }^{ -\frac{ 1 }{ 2 } + \epsilon } ( F ) ) \rightarrow L^2 ( F ) $, $ w \mapsto \int_0^1 e^{ i s \Delta_F } \chi w ( s ) ds $.
The dual estimate of \eqref{eq: Hhalf} shows that $ T^* $ is bounded as a map from $ L^2 ; H^{ - \frac{ 1 }{ 2 } } $ to $ H^{ -\epsilon } $.
Note that if $ w \in L^2 ( [ 0 , 1 ] , H^{ - \frac{ 1 }{ 2 } + \epsilon }( F ) ) $, then
\begin{equation}\begin{split}
& \| ( I + \Delta_F )^{ \frac{ \epsilon }{ 2 } } \int_0^1 e^{ i t \Delta_F } \chi w ( s ) ds \|_{ L^2 } \\
\leq & \| \int_0^1 e^{ i t \Delta_F } [ ( I + \Delta_F )^{ \frac{ \epsilon }{ 2 } } , \chi ] w ( s ) ds \|_{ L^2 } + \| \int_0^1 e^{ i t \Delta_F } \chi ( I + \Delta_F )^{ \frac{ \epsilon }{ 2 } } w ( s ) \|_{ L^2 } \\
\leq & \| [ ( I + \Delta_F )^{ \frac{ \epsilon }{ 2 } } , \chi ] w \|_{ L^2 ; H^{ -\frac{ 1 }{ 2 } } }
+ \| ( I + \Delta_F )^{ \frac{ \epsilon }{ 2 } } w \|_{ L^2 ; H^{ -\frac{ 1 }{ 2 } } } \\
\leq & \| w \|_{ L^2 ; H^{ - \frac{ 1 }{ 2 } + \epsilon } }.
\end{split}\end{equation}
This indicates $ T^* $ is also a bounded operator from $ L^2 ( [ 0 , 1 ] , H^{ -\frac{ 1 }{ 2 } + \epsilon }( F ) ) $ to $ L^2 $.

Combining the boundedness of these two operators and the fact that
\begin{equation}
\tilde{ w } = T T^* ( [ \Delta_M , \chi ] u ),
\end{equation}
we conclude that \eqref{eq: tilw} is true.
\end{proof}
Lemma~\ref{l: funstr} is a special case of \cite[Theorem 1.1]{BGH}, but for the reader's convenience we give a direct proof here.
\begin{proof}[Proof of Lemma~\ref{l: funstr}.]
By Theorem 1.1 in \cite{Bo}, the kernel of $ e^{ i t \Delta_{ \mathbb{ H } } } $ is
\begin{equation}
K ( t , z , z^{ \prime } ) = c | t |^{ -\frac{ 3 }{ 2 } } e^{ -\frac{ i t }{ 4 } } \int_{ \rho }^{ \infty } \frac{ e^{ \frac{ i s^2 } { 4 t } } s }{ \sqrt{ \cosh{ s } - \cosh{ \rho } } } ds
\end{equation}
where $ \rho = \rho ( z , z^{ \prime } ) $ is the hyperbolic distance between $ z $ and $ z^{ \prime } $.
For $ 0 < | t | \leq 1 $ we have (see \cite[Proposition 4.2]{Ba})
\begin{equation}
 \label{eq: ker}
| K ( t , z , z^{ \prime } ) | \leq \frac{ C }{ | t | } ( \frac{ \rho }{ \sinh{ \rho } } )^{ \frac{ 1 }{ 2 } }.
\end{equation}
Note $ \Delta_{ z } K = \Delta_{ z^{ \prime } } K = i \partial_t K $, hence for $ 0 < | t | \leq 1 $ and any $ l\geq 0 $, we have
\begin{equation}
| \Delta_z^l K | + | \Delta_{ z^{ \prime } }^l K | \leq | \partial_t^l K | \leq \frac{ C }{ | t |^{ 1 + 2 l } } ( \frac{ \rho }{ \sinh{ \rho } } ).
\end{equation}
This shows that $ K ( t , z , z^{ \prime } ) $ is smooth in $ z $ and $ z^{ \prime } $ when $ t \neq 0 $.
Let $ K_F ( t , z , z^{ \prime } ) : = \sum_{ \gamma \in \Gamma } K ( t , z , \gamma z^{ \prime } ) = \sum_{ n \in \mathbb{ Z } } K ( t , z , k^n z^{ \prime } ) $, then $ K_F $ is the Schwartz kernel of $ e^{ -i t \Delta_F } $. If we can show that for $ 0 < | t | \leq 1 $
\begin{equation}
| K_F ( t , z , z^{ \prime } ) | \leq \frac{ C }{ | t | },
\end{equation}
then by Lemma~\ref{l: KT}, we can get \eqref{eq: funstr}. By \eqref{eq: ker}, we only need to show that
\begin{equation}
\sum_{ n \in \mathbb{ Z } } ( \frac{ \rho_n }{ \sinh{ \rho_n } } )^{ \frac{ 1 }{ 2 } } \leq C
\end{equation}
where $ \rho_n = \rho ( z , k^n z^{ \prime } ) $ is the hyperbolic distance between $ z $ and $ k^n z^{ \prime } $. Note
\begin{equation}
1 + \frac{ ( y - k^n y^{ \prime } )^2 }{ 2 k^n y y^{ \prime } } \leq 1 + \frac{ | z - k^n z^{ \prime } |^2 }{ k^n y y^{ \prime } } = \cosh{ \rho_n } \leq e^{ \rho_n }.
\end{equation}
Hence we have
\begin{equation}
e^{ -\rho_n } \leq \frac { 2 k^n y y^{ \prime } }{ ( y + k^n y^{ \prime } )^2 }.
\end{equation}
Since $ \rho_n \geq 0 $, we have $ \frac{ \rho_n }{ \sinh{ \rho_n } } \leq 4 e^{ - \frac{ \rho_n }{ 2 } } $. Hence
\begin{equation}
 \label{eq: sum}
\sum_{ n \in \mathbb{ Z } } ( \frac{ \rho_n }{ \sinh{ \rho_n } } )^{ \frac{ 1 }{ 2 } } \leq C \sum_{ n \in \mathbb{ Z } } e^{ -\frac{ \rho_n }{ 4 } } \leq C \sum_{ n \in \mathbb{ Z } } ( \frac{ 2 k^n y y^{ \prime } } { ( y + k^n y^{ \prime } )^2} )^{ \frac{ 1 }{ 4 } } = C \sum_{ n \in \mathbb{ Z } } ( \frac{ 2 k^n \lambda }{ ( k^n + \lambda )^2 } )^{ \frac{ 1 }{ 4 } },
\end{equation}
where $ \lambda : = \frac{ y }{ y^{ \prime } } $. Without loss of generality, we can assume $ 1 \leq \lambda \leq k $. Otherwise, since $ y $, $ y^{ \prime } > 0 $, we can find an $ l \in \mathbb{ Z } $ such that $ k^l y^{ \prime } \leq y \leq k^{ l + 1 } y^{ \prime } $ and then we substitute $ y^{ \prime } $ with $ y^{ \prime \prime } : = k^{ l } y^{ \prime } $. Since the sum in \eqref{eq: sum} is taking for all $ n \in \mathbb{ Z } $, we know that this sum will not change and we have $ \lambda^{ \prime } = \frac{ y }{ y^{ \prime \prime } } \in [ 1 , k ] $. Therefore
\begin{equation}
\sum_{ n \in \mathbb{ Z } } ( \frac{ \rho_n }{ \sinh{ \rho_n } } )^{ \frac{ 1 }{ 2 } } \leq C \sum_{ n \in \mathbb{ Z } } ( \frac{ 2 k^{ n + 1 } }{ k^{ 2 n } + 1 } )^{ \frac{ 1 }{ 4 } } \leq C.
\end{equation}
\end{proof}


\end{document}